\definecolor{lgreen} {RGB}{180,210,100}
\definecolor{dblue}  {RGB}{20,66,129}
\definecolor{ddblue} {RGB}{11,36,69}
\definecolor{lred}   {RGB}{220,0,0}
\definecolor{nred}   {RGB}{224,0,0}
\definecolor{norange}{RGB}{230,120,20}
\definecolor{nyellow}{RGB}{255,221,0}
\definecolor{ngreen} {RGB}{98,158,31}
\definecolor{dgreen} {RGB}{78,138,21}
\definecolor{nblue}  {RGB}{28,130,185}
\definecolor{jblue}  {RGB}{20,50,100}
\DeclareMathOperator{\tr}{tr}
\newcommand{\N}{\mathbb{N}}
\newcommand{\Z}{\mathbb{Z}}
\newcommand{\cut}{\setminus}
\newcommand{\abs}[1]{\left\lvert #1 \right\rvert}
\newcommand\thickbar[1]{\accentset{\rule{.5em}{1.0pt}}{#1}}
\newcommand\thickoverline[1]{\accentset{\rule{.8em}{1.0pt}}{#1}}
\newcommand\extrafootertext[1]{%
    \bgroup
    \renewcommand\thefootnote{\fnsymbol{footnote}}%
    \renewcommand\thempfootnote{\fnsymbol{mpfootnote}}%
    \footnotetext[0]{#1}%
    \egroup
}
\def\msquare{\mathord{\scalerel*{\Box}{gX}}}
\newtheorem{theorem}{Theorem}[section]
\newtheorem{conjecture}{Conjecture}[section]
\newtheorem{corollary}{Corollary}[theorem]
\newtheorem{lemma}[theorem]{Lemma}
\newtheorem{definition}[theorem]{Definition}
\newtheorem*{remark}{Remark}
\title{On the Generation, Structure, and Symmetries of Minimal Prime Graphs}
\author{
\textsc{Ziyu Huang}\\
  \normalsize{\emph{Department of Mathematics, Boston College}}\\
  \normalsize{\emph{Boston, MA, U.S.A.}}\\
\normalsize{\texttt{huangaaf@bc.edu}}
  \\
  \\
  \textsc{Thomas Michael Keller}\\
   \normalsize{\emph{Department of Mathematics, Texas State University}} \\
   \normalsize{\emph{San Marcos, CA, U.S.A.}} \\
  \normalsize{\texttt{keller@txstate.edu}}
  \\
  \\
  \textsc{Shane Kissinger}\\
  \normalsize{\emph{Department of Mathematics, Harvard University}} \\
  \normalsize{\emph{Cambridge, MA, U.S.A.}}\\
\normalsize{\texttt{skissinger@college.harvard.edu}}
  \\
  \\
  \textsc{Wen Plotnick}\\
  \normalsize{\emph{Department of Mathematics, University of Michigan}} \\
  \normalsize{\emph{Ann Arbor, MI, U.S.A.}}\\
  \normalsize{\texttt{plotnw@umich.edu}}
  \\
  \\
  \textsc{Maya Roma}\\
  \normalsize{\emph{Department of Mathematics, University of California-Berkeley}}\\
  \normalsize{\emph{Berkeley, CA, U.S.A.}} \\
  \normalsize{\texttt{mayaroma@berkeley.edu}}
}
\date{}
\begin{document}
  
\maketitle
  
\begin{abstract}
In this paper we continue the study of prime graphs of finite solvable groups. The prime graph, or Gruenberg-Kegel graph, of a finite group G has vertices consisting of the prime divisors of the order of G and an edge from primes p to q if and only if G contains an element of order pq. Since the discovery of a simple, purely graph theoretical characterization of the prime graphs of solvable groups in 2015 these graphs have been studied in more detail from a graph theoretic angle. In this paper we explore several new aspects of these graphs. We characterize regular reseminant graphs and study the automorphisms of reseminant graphs for arbitrary base graphs. We then study minimal prime graphs on larger vertex sets by a novel regular graph construction for base graphs and by proving results on prime graph properties under graph products. Lastly, we present the first new way, different from vertex duplication, to obtain a new minimal prime graph from a given minimal prime graph.

\end{abstract}


\section{Introduction}
\extrafootertext{2010 Mathematics Subject Classification. Primary: 20D10, Secondary: 05C25.}\extrafootertext{Key words and phrases. prime graph, solvable group, 3-colorable, triangle-free, automorphism, graph product, regular graph.}
The \emph{prime graph} of a finite group $G$, also known as a Gruenberg-Kegel graph, is the graph with vertex set defined as the primes dividing $|G|$ and edges $\{p, q\}$ for primes $p$ and $q$ if and only if there is an element of order $pq$ in $G$. A recent and powerful result \cite{gruber} is that a graph is isomorphic to the prime graph of a finite solvable group if and only if its complement is triangle-free and $3$-colorable. 
As we will use the idea of a prime graph of a finite solvable group heavily, we call these \emph{solvable prime graphs}. A particularly important subclass of solvable prime graphs is that of \emph{minimal prime graphs}. Minimal prime graphs have the least possible edges such that removing an edge causes the complement to no longer be 3-colorable or triangle-free.

In this paper we continue the study of solvable prime graphs begun in \cite{gruber} and \cite{florez}. We will present a multitude of new results on these graphs,
thereby further developing some leads from the earlier work.\\
One of the first discoveries on the subject was that the process of \emph{vertex duplication} is a tool to create a new minimal prime graph from a given one by adding a new vertex. Since then, \emph{reseminant graphs} --- minimal prime graphs obtained by repeated vertex duplication from the 5-cycle $C_5$ (the smallest minimal prime graph) --- have been actively studied. 

Results in \cite{florez} aim to generalize the notion of reseminant graphs, which we expand in this paper in Section 2. In that section we also take the study of reseminant graphs into new directions by characterizing regular reseminant graphs and the structure of automorphism groups of reseminant graphs for a starting graph that is not necessarily $C_5$. From this investigation, a natural need arises for increasingly complex examples of minimal prime graphs. This led us to our next avenue of study. 

We wanted to find examples of minimal primes graphs on increasingly larger sets of vertices which cannot be  generated from vertex duplication. (We call such graphs base graphs.)  Using results from \cite{sidorenko}, in Section 3 we introduce a new method to produce an infinite family of base graphs, which includes $C_5$. In Section 4 we then extend the search for minimal prime graphs on larger vertex sets by investigating the effect of products like the direct product, Cartesian product, and strong product in preserving minimality. 

Lastly, there has been a search for other generation methods like vertex duplication to produce new minimal prime graphs from given ones. In Section 5 we present the first advance in this direction: a method we call {\it clique generation}. This is the first method that is separate from vertex duplication.

All code used to create and validate solvable prime graphs and minimal prime graphs can be found at \cite{githubrepository} and was created using \cite{sage}.

\subsection{Notation and Background}
\begin{itemize}
    \item All graphs are simple and undirected unless otherwise stated. We define a graph $\Gamma$ to be the pairing $\Gamma = (V(\Gamma), E(\Gamma))$ where $V(G)$ and $E(G)$ are the vertex and edge sets, respectively.
    \item Given a graph $\Gamma$, the graph $\overline{\Gamma}$ is its complement. 
    \item We denote the graph $\Gamma\setminus \{u,v\}$ to be the graph with the vertices $p$ and $q$ and their associated edges removed. 
    \item We use the notation $\lbrace u, v\rbrace$ to denote an undirected edge between vertices $u$ and $v$.
    \item We use $N_1(v)$ to denote the neighbors of $v$ i.e. the set of vertices with distance $1$ from $v$. We use $N_1[v]$ to denote the closed neighborhood of $v$, i.e. set of vertices with distance less than or equal to $1$ from $v$.
    \item Two vertices $u$ and $v$ are called \emph{true twins} if they share the same closed neighborhood, i.e. if $N_1[u] = N_1[v]$.
    \item A set of vertices $K$ is called a \emph{clique} if all vertices in $K$ are adjacent.
    \item The set of reseminant graphs $\mathcal{R}$ are the graphs generated by vertex duplication from the Cycle Graph of 5 vertices $C_5$.
    \item For any graph $\Gamma$, we define the set of $\Gamma$-reseminant graphs to be the set of those graphs which can be obtained by starting with $\Gamma$ and performing a finite number of vertex duplications on it. 
    \item All groups are finite unless otherwise stated.
\end{itemize}

\begin{definition}[Vertex Duplication]
    \label{def:Vertex Duplication}
    Suppose $\Gamma$ is a minimal prime graph with a subset of vertices $U$ such that there exists a vertex $w\in\Gamma$ with $V=N_1[w]$. Then, the graph
    \begin{equation*}
        \Gamma^{\prime}=(V(\Gamma)\cup\lbrace w^{\prime}\rbrace, E(\Gamma)\cup\lbrace \lbrace w', u\rbrace\mid u\in U\rbrace)
    \end{equation*}
    i.e. the graph formed by adding a vertex $w^\prime$ adjacent to the same vertices as $w$ as well as $w$ itself, is a minimal prime graph generated from $\Gamma$.
\end{definition}

\begin{definition}
    A minimal prime graph $\Gamma$ is a connected graph on two or more vertices such that
\begin{enumerate}[label=(\arabic*)]
\item $\overline{\Gamma}$ is triangle-free
\item $\overline{\Gamma}$ is $3$-colorable
\item For any edge $\{u,v\}$ in $\Gamma$, the graph $\overline{\Gamma \setminus \{u,v\}}$ is no longer triangle-free and $3$-colorable
\end{enumerate}
\end{definition}

A simple example of a minimal prime graph is below.  
\begin{figure}[H]
\centering
\begin{minipage}{.49\textwidth}
\begin{center}
\begin{tikzpicture}
\begin{scope}[every node/.style={circle,thick,draw}]
    \node (8) at ( 1.41421356237310 , 1.41421356237310 ) {8};
    \node (1) at ( 0.000000000000000 , 2.00000000000000 ) {1};
    \node (2) at ( -1.41421356237310 , 1.41421356237310 ) {2};
    \node (3) at ( -2.00000000000000 , 0.000000000000000 ) {3};
    \node (4) at ( -1.41421356237310 , -1.41421356237310 ) {4};
    \node (5) at (0,-2) {5};
    \node (6) at ( 1.41421356237310 , -1.41421356237310 ) {6};
	\node (7) at (2,0) {7};
\end{scope}
\begin{scope}[>={Stealth[black]},
              every edge/.style={draw=black,very thick}]
           \path [-] (1) edge node {} (8);   
           \path [-] (1) edge node {} (2); 
           \path [-] (1) edge node {} (3); 
           \path [-] (1) edge node {} (7);  
           
           \path [-] (2) edge node {} (3);   
           \path [-] (2) edge node {} (4); 
           \path [-] (2) edge[style={draw=nred, very thick}] node {} (8); 
             
           \path [-] (3) edge node {} (4);   
           \path [-] (3) edge node {} (5); 
           
           \path [-] (4) edge node {} (5);   
           \path [-] (4) edge node {} (6); 

		   \path [-] (5) edge node {} (6);   
           \path [-] (5) edge node {} (7); 
            
           \path [-] (6) edge node {} (7);   
           \path [-] (6) edge node {} (8);
           
           \path [-] (7) edge node {} (8);   
           \path [-] (6) edge node {} (8);
\end{scope}
\end{tikzpicture}
\end{center}
\end{minipage}
\begin{minipage}{.5\textwidth}
\begin{center}
\begin{tikzpicture} 
\begin{scope}[every node/.style={circle,thick,draw}]
    \node (8) at ( 1.41421356237310 , 1.41421356237310 ) {8};
    \node (1) at ( 0.000000000000000 , 2.00000000000000 ) {1};
    \node (2) at ( -1.41421356237310 , 1.41421356237310 ) {2};
    \node (3) at ( -2.00000000000000 , 0.000000000000000 ) {3};
    \node (4) at ( -1.41421356237310 , -1.41421356237310 ) {4};
    \node (5) at (0,-2) {5};
    \node (6) at ( 1.41421356237310 , -1.41421356237310 ) {6};
	\node (7) at (2,0) {7};
\end{scope}
\begin{scope}[
              every edge/.style={draw=black,very thick}]
           \path [-] (1) edge node {} (4);   
           \path [-] (1) edge node {} (5); 
           \path [-] (1) edge node {} (6);

           \path [-] (2) edge[style={draw=nblue, very thick}] node {} (5);   
           \path [-] (2) edge node {} (6); 
           \path [-] (2) edge node {} (7); 
           \path [-] (2) edge[style={draw=nred, very thick, dashed}] node {} (8); 
           \path [-] (3) edge node {} (8);   
           \path [-] (3) edge node {} (7); 
           \path [-] (3) edge node {} (6); 
           
           \path [-] (4) edge node {} (1);   
           \path [-] (4) edge node {} (8); 
           \path [-] (4) edge node {} (7); 
           
           \path [-] (5) edge node {} (1); 
           \path [-] (5) edge node {} (8); 
           
           \path [-] (6) edge node {} (3);   
           \path [-] (6) edge node {} (2); 
           \path [-] (6) edge node {} (1); 
           
           \path [-] (7) edge node {} (4);   
           \path [-] (7) edge node {} (3); 
           \path [-] (7) edge node {} (2); 
           
           \path [-] (8) edge node {} (3);   
           \path [-] (8) edge node {} (4); 
           \path [-] (8) edge[style={draw=nblue, very thick}] node {} (5); 
\end{scope}
\end{tikzpicture}
\end{center}
\end{minipage}
\caption{Removing the red edge from the minimal prime graph on the left results in a triangle in its complement on the right.}
\end{figure}
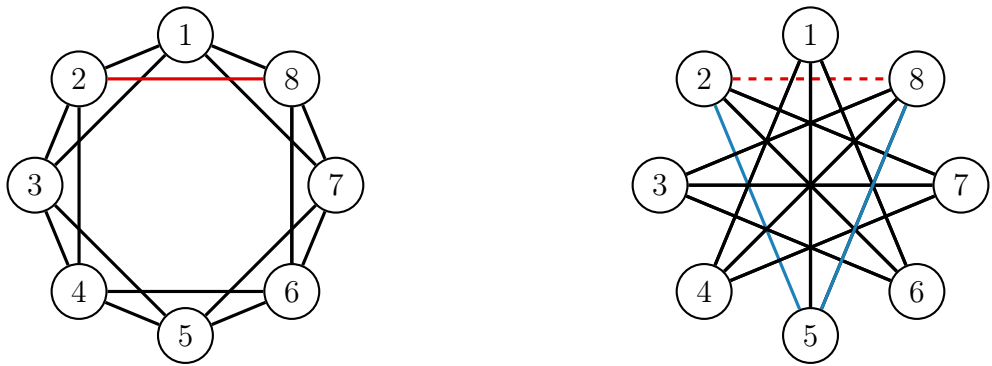


\section{Products and Automorphisms of \texorpdfstring{$\Gamma$}{Gamma}-Reseminant Graphs}

Reseminant graphs in \cite{gruber} originally referred to the family of graphs generated from repeated vertex duplication on the cycle graph $C_5$. Reseminant graphs were then generalized in \cite{florez} to $\Gamma$-reseminant graphs, where $\Gamma$ is a base graph. We continue to investigate $\Gamma$-reseminant graphs by developing techniques to determine graph regularity and by formalizing the structure of their automorphism groups. 

\begin{definition}
\label{def:BaseGraph}
A graph is a base graph if no two vertices are adjacent with the same adjacency relations to the other vertices, i.e. there are no sets of true twins. 
\end{definition}

\subsection{\texorpdfstring{$C_5$}{C\_5}-Reseminant, Direct Products, and Vertex Duplication as a Matrix}
In our investigations of $\Gamma$-reseminant graphs, we found it useful to model vertex duplication in the language of matrices. We used these matrices to determine when a graph would be regular under repeated vertex duplications. The following results easily expand to arbitrary $\Gamma$-reseminant graphs. But, we will consider the case of $\Gamma = C_5$ and motivate the results with a short example. 
\begin{figure}[H]
\centering
\begin{tikzpicture}[scale=.75,
dot/.style = {circle,draw, inner sep=0, minimum size=1.5em},
dot/.default = 6pt  
]
\begin{scope}[every node/.style={dot}]
    \node (5) at (0,.2) {4};
    \node (4) at (1.5,1.3) {3};
    \node (3) at (3,.2) {2};
    \node (2) at (2.2,-1.3) {1};
    \node (1) at (0.7,-1.3) {0};
     \node (6) at (-.5,-2.4) {5};
\end{scope}

\begin{scope}[>={Stealth[black]},
              every edge/.style={draw=black,very thick}]
    \path [-] (1) edge node {} (2);
     \path [-] (2) edge node {} (3);
     \path [-] (3) edge node {} (4);
     \path [-] (4) edge node {} (5);
     \path [-] (5) edge node {} (1);
     \path [-] (6) edge[style={draw=nred,very thick}] node {} (1);
     \path [-] (6) edge[style={draw=nred,very thick}] node {} (2);
     \path [-] (6) edge[style={draw=nred,very thick}] node {} (5);
\end{scope}
\end{tikzpicture}
\caption{Vertex 0 duplicated once on the Cycle Graph $C_5$.}
\label{fig:vertexdup}
\end{figure}
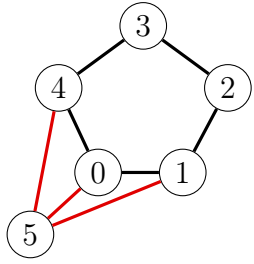
Duplicating a vertex on $C_5$, increases the degree of the original and its neighbors by 1. Likewise, duplications on vertices 4 and 1 increases the degree of the set of true twins of vertex 0 by 2. The adjacency matrix of $C_5$ identifies the number of neighbors a vertex has, so adding the identity matrix gives us a way to model vertex duplication.

As it is not intuitively obvious, we observe that a minimal prime graph does not need to be regular. As a counterexample, take the non-regular minimal prime graph below. 
\begin{figure}[H]
\centering
\begin{tikzpicture}[scale=.8,
dot/.style = {circle,draw, inner sep=0, minimum size=1.5em},
dot/.default = 6pt  
]
\begin{scope}[every node/.style={dot}]
    \node (0) at (3,0) {0};
    \node (1) at (2.523,1.6219) {1};
    \node (2) at (1.2462,2.7288) {2};
    \node (3) at (-0.42694,2.9694) {3};
    \node (4) at (-1.9645,2.2672) {4};
    \node (5) at (-2.8784,0.845197) {5};
    \node (6) at (-2.8784,-0.845197) {6};
    \node (7) at (-1.9645,-2.2672) {7};
    \node (8) at (-0.4269,-2.9694) {8};
    \node (9) at (1.2462,-2.72889) {9};
    \node (10) at (2.5237,-1.62192) {10};
\end{scope}

\begin{scope}[>={Stealth[black]},
              every edge/.style={draw=black,very thick}]
    \path [-] (0) edge node {} (2);
    \path [-] (0) edge node {} (4);
    \path [-] (0) edge node {} (5);
    \path [-] (0) edge node {} (6);
    \path [-] (0) edge node {} (7);
    \path [-] (0) edge node {} (8);
    \path [-] (0) edge node {} (9);
    \path [-] (0) edge node {} (10);
    
    \path [-] (1) edge node {} (3);
    \path [-] (1) edge node {} (6);
    \path [-] (1) edge node {} (8);
    \path [-] (1) edge node {} (9);
    \path [-] (1) edge node {} (10);

    \path [-] (2) edge node {} (0);
    \path [-] (2) edge node {} (4);
    \path [-] (2) edge node {} (5);
    \path [-] (2) edge node {} (6);
    \path [-] (2) edge node {} (7);
    \path [-] (2) edge node {} (8);
    \path [-] (2) edge node {} (9);
    \path [-] (2) edge node {} (10);
    
    \path [-] (3) edge node {} (1);
    \path [-] (3) edge node {} (7);
    
    \path [-] (4) edge node {} (0);
    \path [-] (4) edge node {} (2);
    \path [-] (4) edge node {} (5);
    \path [-] (4) edge node {} (6);
    \path [-] (4) edge node {} (7);
    \path [-] (4) edge node {} (8);
    \path [-] (4) edge node {} (9);
    \path [-] (4) edge node {} (10);
    
    \path [-] (5) edge node {} (0);
    \path [-] (5) edge node {} (2);
    \path [-] (5) edge node {} (4);
    \path [-] (5) edge node {} (6);
    \path [-] (5) edge node {} (7);
    \path [-] (5) edge node {} (8);
    \path [-] (5) edge node {} (9);
    \path [-] (5) edge node {} (10);
    
    \path [-] (6) edge node {} (0);
    \path [-] (6) edge node {} (1);
    \path [-] (6) edge node {} (2);
    \path [-] (6) edge node {} (4);
    \path [-] (6) edge node {} (5);
    \path [-] (6) edge node {} (8);
    \path [-] (6) edge node {} (9);
    \path [-] (6) edge node {} (10);
     
    \path [-] (8) edge node {} (0);
    \path [-] (8) edge node {} (1);
    \path [-] (8) edge node {} (2);
    \path [-] (8) edge node {} (4);
    \path [-] (8) edge node {} (5);
    \path [-] (8) edge node {} (6);
    \path [-] (8) edge node {} (9);
    \path [-] (8) edge node {} (10);
    
    \path [-] (9) edge node {} (0);
    \path [-] (9) edge node {} (1);
    \path [-] (9) edge node {} (2);
    \path [-] (9) edge node {} (4);
    \path [-] (9) edge node {} (5);
    \path [-] (9) edge node {} (6);
    \path [-] (9) edge node {} (8);
    \path [-] (9) edge node {} (10);
    
\end{scope}
\end{tikzpicture}
    \caption{Non-regular minimal prime graph.}
    \label{fig:nonregmpg}
\end{figure}
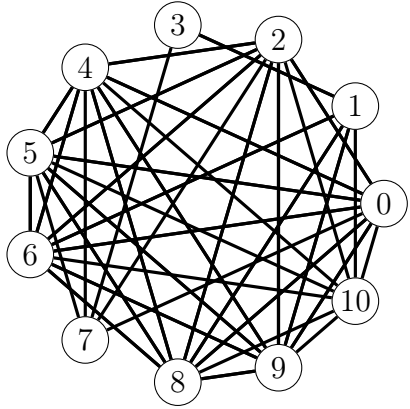

We demonstrate the usefulness of the reformulation of vertex duplication in terms of matrices by proving that $C_5$-reseminant graphs are only regular of a certain form. Furthermore, we prove that there are no non-trivial $C_5$-reseminant graphs that are complement direct products. The \emph{direct (or tensor or Kronecker) product}  $G\times H$ of graphs $G$ and $H$ is defined as the graph with vertex set $V(G)\times V(H)$ and $\{(x,y),(x',y')\}$ is an edge in $G\times H$ if and only if $\{x,x'\}\in E(G)$ and $\{y,y'\}\in E(H)$.

Note that the direct product of regular graphs is a regular graph that has a degree equal to the product of the degrees of its factors. As we will use it later on, note that if $G$ is $k$-regular, then the compliment $\overline{G}$ is a $(n-k-1)$-regular graph.

\begin{lemma}\label{lemma:regular}
	If $\Gamma_1, \dots, \Gamma_n$ are graphs such that $\Gamma_j$ is $k_j$-regular for $j\in[1,n]$, then $\displaystyle \bigtimes_{i=1}^n \Gamma_i$ is regular of degree $\prod_{j=1}^nk_j$. 
\end{lemma}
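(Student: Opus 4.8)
The plan is to reduce to the binary case and then induct on the number of factors. First I would settle $n=2$ by a direct degree count. Fix a vertex $(x,y)$ in $\Gamma_1\times\Gamma_2$. By the definition of the direct product, a vertex $(x',y')$ is adjacent to $(x,y)$ precisely when $\{x,x'\}\in E(\Gamma_1)$ and $\{y,y'\}\in E(\Gamma_2)$; that is, when $x'$ ranges over the $k_1$ neighbors of $x$ in $\Gamma_1$ and, independently, $y'$ ranges over the $k_2$ neighbors of $y$ in $\Gamma_2$. Because these two choices are independent, the number of neighbors of $(x,y)$ is exactly $k_1 k_2$, and since this count does not depend on the chosen vertex, $\Gamma_1\times\Gamma_2$ is $k_1 k_2$-regular.

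For the general statement I would proceed by induction on $n$, using the associativity of the direct product up to isomorphism, namely $\bigtimes_{i=1}^n \Gamma_i \cong \left(\bigtimes_{i=1}^{n-1}\Gamma_i\right)\times \Gamma_n$. The base case $n=1$ is immediate. For the inductive step, the induction hypothesis gives that $\bigtimes_{i=1}^{n-1}\Gamma_i$ is regular of degree $\prod_{j=1}^{n-1}k_j$; applying the binary case to this graph together with the $k_n$-regular graph $\Gamma_n$ yields that the product is regular of degree $\left(\prod_{j=1}^{n-1}k_j\right)k_n = \prod_{j=1}^n k_j$, as claimed.

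Since regularity of each factor guarantees a uniform local neighbor count, there is no genuine obstacle here; the only point requiring minor care is the bookkeeping of associativity so that the binary degree computation can be applied iteratively. Alternatively, I could bypass the induction entirely by computing the degree of a tuple $(v_1,\dots,v_n)$ directly: its neighbors are exactly the tuples $(v_1',\dots,v_n')$ with $\{v_i,v_i'\}\in E(\Gamma_i)$ for every $i$, and by independence of the coordinate choices this count equals $\prod_{i=1}^n \deg_{\Gamma_i}(v_i)=\prod_{i=1}^n k_i$, independent of the starting tuple. Either route is routine; I would likely present the inductive version for cleanliness and defer the associativity identification to a standard reference.
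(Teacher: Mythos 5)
Your proof is correct, but it takes a genuinely different route from the paper. You argue combinatorially: for $n=2$ you count neighbors of $(x,y)$ directly (the choices of neighbor in each coordinate are independent, giving $k_1k_2$), and then you either induct using associativity of the direct product or count neighbors of a general tuple in one step. The paper instead argues spectrally: it uses the fact that the adjacency matrix of a direct product is the Kronecker product of the factors' adjacency matrices, observes that the all-ones vector ${\bf 1}_j$ satisfies $A(\Gamma_j)\cdot{\bf 1}_j = k_j\cdot{\bf 1}_j$, and concludes that the all-ones vector of the product graph is an eigenvector of $\bigtimes_j A(\Gamma_j)$ with eigenvalue $\prod_j k_j$, which is exactly regularity of that degree. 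Your approach is more elementary and self-contained --- it needs no linear algebra, and your non-inductive variant even sidesteps the associativity bookkeeping you flag as the only delicate point. The paper's matrix formulation buys coherence with its surrounding machinery: the same Kronecker-product identity reappears in the trace computation $\tr\bigl((A\otimes B)^3\bigr)=\tr(A^3)\tr(B^3)$ used to prove that direct products preserve triangle-freeness, and the paper deliberately develops a matrix viewpoint (also for vertex duplication) throughout Section 2. Either proof is complete; one small point worth stating explicitly in your count is that no neighbor pair $(x',y')$ can coincide with $(x,y)$ itself, since that would require a loop $\{x,x\}\in E(\Gamma_1)$, excluded because all graphs are simple.
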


\begin{proof}
	Suppose $\Gamma_j$ is a $k_j$-regular graph with $m_j$ vertices, then $A(\Gamma_j)\cdot{\bf 1}_j = k_j\cdot {\bf 1}_j$ where ${\bf 1}_j$ is the all-1 column vector of length $m_j$ for $j\in[1,n]$. Consider the adjacency matrix $A(\bigtimes_{i=1}^n \Gamma_i) = \bigtimes_{i=1}^n A(\Gamma_i),$ which is a $\left(\prod_{j=1}^n m_j\right)\times \left(\prod_{j=1}^n m_j\right)$ matrix. It follows that the all-1 column vector of length $\prod_{j=1}^n m_j$ is equal to $\bigtimes_{j=1}^n {\bf 1}_j$ and so we have:
	
	$$
	  \left(\bigtimes_{j=1}^n A(\Gamma_j)\right) \cdot \bigtimes_{j=1}^n {\bf 1}_j = \bigtimes_{j=1}^n A(\Gamma_j)\cdot{\bf 1}_j= \bigtimes_{i=1}^n k_j\cdot{\bf 1}_j =  \left(\prod_{j=1}^nk_j\right)\bigotimes_{i=1}^n {\bf 1}_j  	$$
	
	Therefore, the direct product $\bigtimes_{i=1}^n \Gamma_i$ is regular of degree $\prod_{j=1}^nk_j$. 
\end{proof}

\begin{theorem}\label{thm:resnonregular}
	All graphs generated from $C_5$ through vertex duplication in $\mathcal{R}$ are non-regular or $k$-regular on $n$ vertices where $k=2+3h$ and $n=5+5h$ for some positive integer $h$. 
\end{theorem}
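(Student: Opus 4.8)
The plan is to exploit the fact that vertex duplication only ever creates true twins, so that every graph in $\mathcal{R}$ is a blow-up of $C_5$ whose regularity is governed by a single linear equation. First I would record the structural observation that underlies everything: since duplicating a vertex $w$ produces a new vertex $w'$ with $N_1[w'] = N_1[w]$, the vertices $w$ and $w'$ are true twins, and hence each ``original'' vertex $i$ of $C_5$ is expanded into a block $B_i$ of mutually adjacent true twins. Writing $a_i = |B_i| \geq 1$, an easy induction on the number of duplications shows that the resulting graph is exactly the blow-up of $C_5$ in which each vertex $i$ is replaced by the clique on $a_i$ vertices and two blocks $B_i, B_j$ are joined completely precisely when $\{i,j\} \in E(C_5)$.

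With this structure in hand, the degree of any vertex in block $B_i$ is $(a_i - 1) + a_{i-1} + a_{i+1}$ (subscripts mod $5$), the first term coming from the clique $B_i$ and the remaining terms from the two neighboring blocks in $C_5$. This is precisely where the matrix reformulation pays off: if $A$ denotes the adjacency matrix of $C_5$ and $\mathbf{a} = (a_0, \ldots, a_4)^T$ is the multiplicity vector, then the common-degree condition becomes $(A+I)\mathbf{a} = (k+1)\,\mathbf{1}$, i.e. $(A+I)\mathbf{a}$ must be a constant vector. So the graph is $k$-regular if and only if $\mathbf{a}$ solves this single linear system.

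The key step is then to show that the only solutions are the constant vectors $\mathbf{a} = a\,\mathbf{1}$. I would argue this via the spectrum of the circulant matrix $A + I$: because $C_5$ is $2$-regular, $(A+I)\mathbf{1} = 3\,\mathbf{1}$, so $\mathbf{1}$ is an eigenvector with eigenvalue $3$, and the remaining eigenvalues $1 + 2\cos(2\pi k/5)$ for $k = 1,2,3,4$ are all nonzero. Hence $A + I$ is invertible and symmetric, so it maps $\operatorname{span}(\mathbf{1})$ bijectively onto itself; from $(A+I)\mathbf{a} \in \operatorname{span}(\mathbf{1})$ we conclude $\mathbf{a} \in \operatorname{span}(\mathbf{1})$, forcing $a_0 = \cdots = a_4 =: a$. (Alternatively, subtracting consecutive equations of the system gives $a_{i+2} = a_{i-1}$ for all $i$, which chains around the $5$-cycle to the same conclusion.)

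Finally I would read off the parameters: with all blocks of size $a$ we have $n = 5a$ vertices and common degree $k = (a-1) + 2a = 3a - 1$. Setting $h = a - 1$ gives $n = 5 + 5h$ and $k = 2 + 3h$, and since a genuinely duplicated graph has total size greater than $5$ while all $a_i$ are equal, we must have $a \geq 2$, i.e. $h \geq 1$. I expect the main obstacle to be the first paragraph rather than the linear algebra: one must argue carefully that duplication preserves the block-clique structure, together with the complete-bipartite joins between adjacent blocks, at every stage, so that the crude multiplicity vector $\mathbf{a}$ really does capture the graph up to isomorphism. Once that reduction is secured, the regularity statement follows immediately from the invertibility of $A + I$.
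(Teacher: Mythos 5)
Your proposal is correct and follows essentially the same route as the paper: both reduce a reseminant graph to its vector of true-twin multiplicities and recast regularity as the linear system whose matrix is $A(C_5)+I$, using the invertibility of that matrix (the paper via $\det = 3$, you via the circulant spectrum or the subtraction trick) to force all multiplicities to be equal and then reading off $k = 2+3h$, $n = 5+5h$. The only cosmetic differences are that you parametrize by block sizes $a_i$ rather than duplication counts $w_i = a_i - 1$, and you establish the blow-up/true-twin structure directly by induction instead of citing the lemmas from the earlier paper.
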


\begin{proof}
   Let $G\in\mathcal{R}$ be an arbitrary graph generated from vertex duplication on $C_5$. We choose a labeling of $G$ such that an isomorphic copy of $C_5$ is labeled $1,2,\dots, 5$. Let $\Pi$ be a partition of $V(G)$ where $\Pi = \{V_1, V_2,\dots, V_5\}$ such that $V_i$ is the set of true twins of vertex $i\in\{1,2, \dots, 5\}$. By Lemma 7.4 of \cite{florez} vertex duplication on any vertex in the same set of true twins $V_i\in\Pi$ produces isomorphic graphs. Thus, we can treat our graph $G$ as having been generated from some sequence of duplications of the vertices $1, 2, \dots, 5$. Denote this sequence by $(a_1, \dots, a_d)$, where $d$ denotes the total number of vertex duplications and $a_i\in \{1, 2,\dots, 5\}$ for all $i$.
   
   Next, we show that we only need consider the number of times each vertex is duplicated and can safely ignore the order in which this is done. Let $G'$ be the reseminant graph that is given by a reordering of the sequence $(a_i)$. We define the identity isomorphism between the base graphs of $G$ and $G'$, which are both $C_5$. By Lemma 7.3 of \cite{florez},
   this extends to an isomorphism $\varphi: G \to G'$ since the set of true twins of $G$ and $G'$ are identical. So, we can safely reorder the sequence. Therefore, we need only to consider the multiplicities of each of the 5 vertices. 
   
   Let $v = (v_1, v_2, v_3, v_4, v_5)^T$ be the vector whose component $v_i$ for $i\in\{1, \dots, 5\}$ represents the degree of the vertices in the set $V_i$. Let $w = (w_1, w_2, w_3, w_4, w_5)^T$ be the vector where $w_i$ equals the number of times a vertex was duplicated in $V_i$ to attain the graph $G$. The vector $w$ uniquely determines the graph $G$ as any tuple with multiplicities $w_i$ produces a graph isomorphic to $G$. This follows directly from Lemma 7.5 of \cite{florez}. As vertex duplication creates a vertex with the same adjacency relations, we can model the action of vertex duplication via the following matrix.  \begin{equation*}
        A=\begin{bmatrix}1 & 1 & 0 & 0 & 1\\
             1 & 1 & 1 & 0 & 0\\
             0 & 1 & 1 & 1 & 0\\
             0 & 0 & 1 & 1 & 0\\
             1 & 0 & 0 & 1 & 1\end{bmatrix}
    \end{equation*}

We can then relate the vectors $w$ and $v$ by the relation $Aw+(2,2,2,2,2)^T=v$. Note the all-2 column is from the starting degree of $C_5$. As we are determining exactly when $G$ is $k$-regular for some positive integer $k$, we set $v=(k, k, k, k, k)$ and get $w = (\frac{k-2}{3},\frac{k-2}{3},\frac{k-2}{3},\frac{k-2}{3},\frac{k-2}{3})^T$. The determinant of $A$ is 3, so we conclude $w$ is the unique solution. As we can only duplicate vertices an integer number of times, we restrict our attention to the case when $w$ has integral entries. For this to be the case, we see we must have $k=2+3h$.

We conclude that if $G$ is $k$-regular, $k=2+3h$ for some $h$. We have a total of $5h$ vertex duplications, so the total number of vertices in $G$ is $5+5h$. It follows then that $G$ is non-regular for all other integer solutions to $A\vec{w}+2I = \vec{v}$. 
\end{proof}

We define the complementary direct product $\thickbar \times$ as $G\thickbar{\times} H = \overline{\overline{G} \times \overline{H}}$. As an application of \Cref{thm:resnonregular}, we will show that the only reseminant graph contained within $T = \{\thickoverline{\bigtimes}_{i=1}^n C_5\,|\,n\in \N\}$ is $C_5$. We will show later that the graphs contained within $T$ are solvable prime graphs. 

\begin{theorem}\label{thm:intersection}
	Let $T = \{\thickoverline{\bigtimes}_{i=1}^n C_5\,|\,n\in \N\}$ be the prime graphs constructed from repeated direct products. Then, the sets $T$ and $\mathcal{R}$ intersect at $T \cap \mathcal{R} = \{C_5\}$.
\end{theorem}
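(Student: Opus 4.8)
The plan is to exploit the fact that every graph in $T$ is \emph{regular} with an explicitly computable order and degree, and then to confront these two invariants with the rigid numerical constraints that \Cref{thm:resnonregular} places on regular members of $\mathcal{R}$. Since a reseminant graph that happens to be regular is forced into a one-parameter family, matching the parameters coming from $T$ should collapse everything to a single Diophantine equation with essentially only the trivial solution.

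First I would unwind the complementary direct product. Because $C_5$ is $2$-regular on $5$ vertices, its complement $\overline{C_5}$ is $(5-2-1)=2$-regular on $5$ vertices, so by \Cref{lemma:regular} the iterated direct product $\bigtimes_{i=1}^n \overline{C_5}$ is $2^n$-regular on $5^n$ vertices. Taking the complement once more (recall the complement of a $d$-regular graph on $N$ vertices is $(N-d-1)$-regular), I conclude that $\thickoverline{\bigtimes}_{i=1}^n C_5 = \overline{\bigtimes_{i=1}^n \overline{C_5}}$ is $(5^n - 2^n - 1)$-regular on $5^n$ vertices; in particular every element of $T$ is regular. The case $n=1$ gives $\overline{\overline{C_5}}=C_5$, so $C_5 \in T$, and $C_5 \in \mathcal{R}$ by definition, which already establishes $C_5 \in T\cap\mathcal{R}$.

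Next I would argue the reverse inclusion by contradiction: suppose some $\Gamma = \thickoverline{\bigtimes}_{i=1}^n C_5$ with $n\ge 2$ lies in $\mathcal{R}$. Since $\Gamma$ is regular, \Cref{thm:resnonregular} forces its order and degree to equal $5+5h$ and $2+3h$ respectively for one and the same nonnegative integer $h$. Matching orders, $5^n = 5+5h$ gives $h = 5^{n-1}-1$; feeding this into the degree condition and comparing with the true degree $5^n - 2^n - 1$ reduces, after a short simplification, to $5^{n-1} = 2^{n-1}$. As $(5/2)^{n-1}=1$ forces $n=1$, this contradicts $n\ge 2$, so no such $\Gamma$ belongs to $\mathcal{R}$, and hence $T\cap\mathcal{R} = \{C_5\}$.

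The only genuine content is getting the degree $5^n - 2^n - 1$ right and then invoking \Cref{thm:resnonregular}; accordingly the main (and quite mild) obstacle is simply bookkeeping the two complementation steps correctly so the order and degree of a member of $T$ are computed without error. Once those invariants are in hand, no estimate or case analysis is needed, since the resulting equation $5^{n-1}=2^{n-1}$ pins down $n=1$ immediately.
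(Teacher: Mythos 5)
Your proposal is correct and follows essentially the same route as the paper: both compute via \Cref{lemma:regular} that an element of $T$ is $(5^n-2^n-1)$-regular on $5^n$ vertices, then invoke \Cref{thm:resnonregular} to force $5^n = 5+5h$ and $5^n-2^n-1 = 2+3h$ simultaneously, which collapses to $5^{n-1}=2^{n-1}$ and hence $n=1$. The only cosmetic difference is that the paper routes the degree computation through the self-complementarity of $C_5$, whereas you compute the degree of $\overline{C_5}$ directly; the numbers and the concluding Diophantine argument are identical.
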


\begin{proof}
	Suppose $\Gamma \in T\cap\mathcal{R}$ and $G$ is not isomorphic to $C_5$. Since $\Gamma$ is contained in $T$, we have that $\Gamma = \thickoverline{\bigtimes}_{i=1}^t C_5$ for some positive integer $t$ and so $\Gamma$ has $n=5^t$ total vertices. Note that since $C_5$ is self-complementary, we have that $\thickoverline{\bigtimes}_{i=1}^t C_5 = \overline{\bigtimes_{i=1}^t C_5}$.  By \Cref{lemma:regular}, the graph $\bigtimes_{i=1}^t C_5$ is $2^t$ regular and so $\Gamma$ is $(5^t-2^t-1)$-regular.

	But, $\Gamma$ is also contained in $\mathcal{R}$ and is regular. By \Cref{thm:resnonregular}, there exists a positive integer $h$ such that $5^t-2^t-1 = 2+3h$ and $5^t=5+5h$. 
	
Substituting for $h = 5^{t-1}-1$, we get
\[
\begin{split}
	5^t - 2^t - 1 & = 2+3(5^{t-1}-1) \\
	5^t - 3\cdot \frac{5^t}{5} - 2^t & = 0 \\
	\frac{2}{5}5^t - 2^t & = 0 \\
	2(5^{t-1} - 2^{t-1}) & = 0
\end{split}
\]

It follows that our only solution is $t=1$. But, this means that $\Gamma$ was actually $C_5$ and so $T\cap\mathcal{R} = \{C_5\}$. 	
\end{proof}

\subsection{Automorphism Group Structure of \texorpdfstring{$\Gamma$}{Gamma}-Reseminant Graphs}

From observation, the structure of a $\Gamma$-reseminant graph appears to follow the symmetry of the base graph $\Gamma$. This section examines the connection between the symmetries of a $\Gamma$-reseminant graph and $\Gamma$ through their automorphism groups.

\begin{lemma}\label{lemma:completegraph}
	Let $\Gamma = (V,E)$ and $H$ be a set of true twins of a vertex $v$ with size $n$. Then, the induced subgraph $\Gamma[H] = (H, E_H)$ where $E_H = \{\{u,v\}\,|\,u,v\in H, \{u,v\}\in E\}$ is isomorphic to $K_n$.
\end{lemma}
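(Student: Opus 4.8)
The plan is to read everything off the definition of true twins, exploiting one structural fact: the closed neighborhood $N_1[x]$ always contains $x$ itself, because $x$ is at distance $0$ from itself. Consequently, if a vertex lies in the closed neighborhood of a \emph{distinct} vertex, it must be an honest neighbor, i.e. there is an edge between them. I would set up the argument so that this single observation produces an edge between every pair of vertices of $H$, which is exactly what $\Gamma[H] \cong K_n$ asserts.

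First I would fix two arbitrary distinct vertices $u, w \in H$. Since $H$ is a set of true twins of $v$, every element of $H$ shares the closed neighborhood of $v$; in particular $N_1[u] = N_1[v] = N_1[w]$. Because $w \in N_1[w]$ and $N_1[w] = N_1[u]$, we get $w \in N_1[u]$. As $w \neq u$, the vertex $w$ cannot be the center $u$ of this closed neighborhood, so it must lie in the open neighborhood $N_1(u)$; that is, $\{u, w\} \in E$. Since $u$ and $w$ were arbitrary distinct elements of $H$, every pair of vertices of $H$ is joined by an edge, and therefore the induced subgraph $\Gamma[H]$ on the $n = \abs{H}$ vertices is complete, giving $\Gamma[H] \cong K_n$.

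I do not expect a genuine obstacle here, since the content is entirely definitional; the only point meriting care is the implicit fact that the vertices of $H$ are pairwise true twins rather than merely each being a twin of $v$. This is automatic: by hypothesis every member of $H$ has one and the same closed neighborhood $N_1[v]$, so the relation holds simultaneously for all pairs, and the adjacency argument above applies uniformly to each pair. This uniformity is what lets the pairwise conclusion assemble into a single complete graph rather than just a collection of edges.
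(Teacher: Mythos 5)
Your proof is correct and is essentially the paper's argument with the details written out: the paper simply notes that true twins (equivalently, duplicated vertices) have identical adjacency relations and declares the claim immediate, while you spell out the key definitional fact that $w \in N_1[w] = N_1[u]$ forces $\{u,w\} \in E$ for distinct twins $u,w$. No gap; your version just makes explicit what the paper leaves to the reader.
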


\begin{proof}
	Recall that vertex duplication creates a new vertex that has identical adjacency relations to the original. The claim follows immediately.
\end{proof}

\begin{theorem}\label{thm:normalsubgroup}
	Given a base graph $\Gamma$ on $n$ vertices, let $G$ be a $\Gamma$-reseminant graph with sets of true twins $V_i$ of size $h_i$ for $i\in\{1,2,\dots, n\}$. Then, there exists a subgroup $H$ isomorphic to the direct product of symmetric groups $S_{h_1}\times \cdots \times S_{h_n}$ such that $H\lhd \mathrm{Aut}(G)$.
	
	Furthermore, $\mathrm{Aut}\,G[V_1]\times \cdots \times \mathrm{Aut}\,G[V_n]\cong H$. 
\end{theorem}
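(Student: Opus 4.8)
The plan is to construct the subgroup $H$ explicitly as the set of automorphisms that permute vertices within each true-twin class $V_i$ while fixing everything else setwise, and then show this subgroup has the two claimed properties (normality and the product-of-automorphism-groups isomorphism). The key structural fact I would lean on is Lemma~\ref{lemma:completegraph}: each induced subgraph $G[V_i]$ is a complete graph $K_{h_i}$, whose automorphism group is exactly $S_{h_i}$. So the second claim, $\mathrm{Aut}\,G[V_1]\times\cdots\times\mathrm{Aut}\,G[V_n]\cong H$, will reduce to showing that $H$ is precisely the internal direct product of these symmetric-group actions.

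First I would verify that an arbitrary permutation $\sigma$ acting within a single class $V_i$ (fixing all other vertices pointwise) is genuinely a graph automorphism of $G$. This is where the definition of true twins does the work: every vertex in $V_i$ has identical closed neighborhood, so for any $u,u'\in V_i$ and any $x\notin V_i$, we have $\{u,x\}\in E \iff \{u',x\}\in E$, and by Lemma~\ref{lemma:completegraph} all of $V_i$ is mutually adjacent. Hence permuting within $V_i$ preserves every edge. Letting $H_i$ denote the group of such permutations, $H_i\cong S_{h_i}$, and since the $V_i$ are disjoint the actions commute, so $H:=H_1\times\cdots\times H_n\cong S_{h_1}\times\cdots\times S_{h_n}$ sits inside $\mathrm{Aut}(G)$. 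This simultaneously establishes the "Furthermore" claim, since $H_i$ is exactly $\mathrm{Aut}\,G[V_i]$ extended by the identity on the complement.

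Next I would prove normality, $H\lhd\mathrm{Aut}(G)$. The crucial observation is that the partition $\{V_1,\dots,V_n\}$ into maximal true-twin classes is a graph invariant: "being true twins" ($N_1[u]=N_1[u']$) is preserved by any automorphism, so any $\phi\in\mathrm{Aut}(G)$ permutes the classes among themselves, mapping $V_i$ bijectively onto some $V_{\pi(i)}$ (which forces $h_i=h_{\pi(i)}$). Given this, for $\sigma\in H$ I would check that $\phi\sigma\phi^{-1}$ again acts only within classes: it moves a vertex of $V_j$ only if $\sigma$ moves the corresponding vertex of $V_{\pi^{-1}(j)}$, so $\phi\sigma\phi^{-1}$ fixes setwise every class and therefore lies in $H$. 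This conjugation-closure is exactly normality.

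\textbf{The main obstacle} I anticipate is the normality step, specifically justifying rigorously that every automorphism respects the true-twin partition. One must argue that the maximal true-twin equivalence classes are exactly the $V_i$ (not coarser or finer), which uses the base-graph hypothesis: in the base graph $\Gamma$ no two vertices are true twins, so distinct $V_i$ cannot merge into a single larger twin class in $G$. I would state this as a short preliminary claim — that $u,u'$ are true twins in $G$ if and only if they lie in the same $V_i$ — and use it to pin down that $\phi$ permutes the blocks, from which normality follows cleanly. The remaining verifications (that conjugation lands back in $H$, that the product is direct) are routine once the block-permutation property is in hand.
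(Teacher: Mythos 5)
Your proposal is correct, and it differs from the paper's proof in the mechanism used to obtain normality. The paper proceeds top-down: it declares the partition $\Pi=\{V_1,\dots,V_n\}$ an $\mathrm{Aut}(G)$-set, obtains a homomorphism $\psi\colon \mathrm{Aut}(G)\to S_n$ recording how an automorphism permutes the classes, and defines $H$ as $\ker\psi$, so that normality comes for free; the isomorphism $H\cong S_{h_1}\times\cdots\times S_{h_n}$ is then given by the map sending $h\in H$ to its restrictions on the classes, with the verification that this is an isomorphism asserted as easy. You instead build $H$ bottom-up as the internal direct product of the groups $H_i\cong\mathrm{Aut}\,G[V_i]\cong S_{h_i}$ of within-class permutations (checking via the true-twin property and \Cref{lemma:completegraph} that each such permutation really is an automorphism of $G$), which yields the ``Furthermore'' isomorphism essentially by construction, and you then prove normality by an explicit conjugation computation. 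The two subgroups coincide, since an automorphism fixes every $V_i$ setwise if and only if it is a product of within-class permutations, so the proofs are equivalent in content. What your route buys is that it surfaces the point both arguments ultimately rest on and the paper silently assumes when it declares $\Pi$ an $\mathrm{Aut}(G)$-set: that the $V_i$ are exactly the maximal true-twin classes of $G$, so that every automorphism permutes them --- and this is precisely where the base-graph hypothesis (no true twins in $\Gamma$) is needed, since otherwise distinct classes could merge; your preliminary claim makes this explicit, and also supplies the surjectivity detail the paper waves at with ``it is easy to see.'' What the paper's route buys is brevity: the kernel formulation packages normality in one line and avoids the conjugation check entirely.
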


\begin{proof}
	Let $\Pi$ be a partition of $V(G)$ into the set of true twins $V_1, V_2, \dots, V_n$ where $V_i$ is the set of the vertex $i\in \{1,2,\dots, n\}$ in the base graph $\Gamma$. Let $\Pi$ also be an $\mathrm{Aut}\,G$-set where $g\in \mathrm{Aut}\,G$ acts on the $\Pi$ by $gV_i = V_{\sigma(i)}$ for a permutation $\sigma\in S_n$. We have then a homomorphism $\psi:\mathrm{Aut}\,G \to S_n$ defined by sending an automorphism $g$ to its induced permutation $\sigma\in S_n$. 
	
	Let $H$ be the kernel of this homomorphism. Then $H$ is a normal subgroup of $\mathrm{Aut}\,G$ and consists of all automorphisms of $G$ which act trivially on $\Pi$, but permute the individual sets $V_i$ of true twins internally
	
	Recall that by \Cref{lemma:completegraph}, the induced subgraph $G[V_i]$ is isomorphic to $K_{h_i}$. Therefore, we have that $\mathrm{Aut}\,G[V_i] \cong S_{h_i}$. Let $\phi:H\to S_{h_1}\times \cdots \times S_{h_n}$ be the map defined by sending an automorphism $h\in H$ that acts on each $V_i\in \Pi$ by $\sigma_i$ to the product $\sigma_1\times \sigma_2\times\cdots\times\sigma_n$ for $\sigma_i\in S_{h_i}$. 

It is easy to see that $\phi$ is an isomorphism and so $H\cong S_{h_1}\times \cdots \times S_{h_n}$. Since $G[V_i]\cong K_{h_i}$ for $i\in\{1, 2, \dots,n\}$ and $\mathrm{Aut}\,K_{h_i} \cong S_{h_i}$, we have that  $\mathrm{Aut}\,G[V_1]\times \cdots \times \mathrm{Aut}\,G[V_n]\cong H$. 
\end{proof}

Since we found a normal subgroup, it is a natural step to consider the quotient $(\mathrm{Aut}\,G)/H$ in an effort to establish a connection between $\mathrm{Aut}\,G$ and $\mathrm{Aut}\,\Gamma$.

For the next result, we use Proposition 7.2 from \cite{florez}, which says that taking the base graph $\Gamma$ of a $\Gamma$-reseminant graph $G$ is a well-defined operation. Moreover, by Lemma 7.3 of \cite{florez}, the image of $\Gamma$ under an automorphism $\varphi$ is an isomorphic copy of $\Gamma$. There may be many isomorphic copies of $\Gamma$ in $G$. So, an automorphism maps a copy of $\Gamma$ to another copy, which may be on different vertices.   

\begin{theorem}\label{thm:autsubgroups}
	Given a base graph $\Gamma$ on $n$ vertices, let $G$ be a $\Gamma$-reseminant graph with a partition $\Pi$ containing sets of true twins $V_i$ of size $h_i$ for $i\in\{1,2,\dots,n\}$. Then, $\mathrm{Aut}(G)/H \cong K \subseteq \mathrm{Aut}\Gamma$ for a subgroup $K$ where $H\cong S_{h_1}\times\cdots\times S_{h_n}$. 
\end{theorem}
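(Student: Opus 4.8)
The plan is to apply the First Isomorphism Theorem to the homomorphism $\psi : \mathrm{Aut}\,G \to S_n$ already constructed in the proof of \Cref{thm:normalsubgroup}, which sends each automorphism $g$ to the permutation $\sigma$ of the index set $\{1,\dots,n\}$ determined by $gV_i = V_{\sigma(i)}$. Since $H = \ker\psi$ by construction, the First Isomorphism Theorem immediately yields $\mathrm{Aut}(G)/H \cong \mathrm{im}\,\psi$, and $H \cong S_{h_1}\times\cdots\times S_{h_n}$ was established there as well. Setting $K := \mathrm{im}\,\psi \leq S_n$, the whole content of the theorem collapses to a single containment $K \subseteq \mathrm{Aut}\,\Gamma$, where $\mathrm{Aut}\,\Gamma$ is regarded as the subgroup of $S_n$ consisting of the adjacency-preserving permutations of the $n$ vertices of $\Gamma$.

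To prove that containment, I would first record the key structural fact that adjacency in $G$ between distinct blocks is ``all-or-nothing'' and faithfully records the edges of $\Gamma$: for $i \neq j$, either every vertex of $V_i$ is adjacent to every vertex of $V_j$ or none is, and the former occurs exactly when $\{i,j\} \in E(\Gamma)$. This holds because vertex duplication copies the adjacency relations of the original vertex verbatim, so all members of a true-twin class $V_i$ share the same neighbors outside $V_i$; by Proposition 7.2 of \cite{florez} the base graph $\Gamma$ is recovered precisely by collapsing each $V_i$ to the single vertex $i$ and joining $i$ to $j$ when the blocks $V_i$ and $V_j$ are adjacent.

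With this fact in hand the containment is a short adjacency chase. Fix $g \in \mathrm{Aut}\,G$ and let $\sigma = \psi(g)$, so that $gV_i = V_{\sigma(i)}$. For any $i \neq j$ pick $u \in V_i$ and $v \in V_j$; then $\{i,j\} \in E(\Gamma)$ iff $\{u,v\} \in E(G)$, and since $g$ preserves adjacency, $\{u,v\} \in E(G)$ iff $\{g(u),g(v)\} \in E(G)$. As $g(u) \in V_{\sigma(i)}$ and $g(v) \in V_{\sigma(j)}$, the all-or-nothing property identifies the latter with $\{\sigma(i),\sigma(j)\} \in E(\Gamma)$. Chaining these equivalences gives $\{i,j\} \in E(\Gamma) \iff \{\sigma(i),\sigma(j)\} \in E(\Gamma)$, that is, $\sigma \in \mathrm{Aut}\,\Gamma$. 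Since $g$ was arbitrary, $K = \mathrm{im}\,\psi \subseteq \mathrm{Aut}\,\Gamma$, which is the assertion.

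I expect the main obstacle to be the content of the second paragraph: rigorously establishing that the induced permutation $\sigma$ respects the edges of $\Gamma$ rather than merely shuffling the blocks. Everything downstream is formal — the isomorphism theorem together with the equivalence chase — so the care must go into justifying that the block-level adjacency of $G$ coincides with the edge relation of $\Gamma$, and hence that an automorphism of $G$, which a priori only preserves vertex adjacency, induces a permutation of the blocks preserving the edge relation of $\Gamma$. An equivalent route would be to invoke Lemma 7.3 of \cite{florez} directly: a transversal of $\Pi$ is an embedded copy of $\Gamma$, the automorphism $g$ carries it to another copy, and the induced relabeling of the blocks is precisely the isomorphism $\Gamma \to \Gamma$ that the lemma guarantees.
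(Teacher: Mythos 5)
Your proposal is correct, and it takes a genuinely different (and leaner) route than the paper. The paper never invokes the First Isomorphism Theorem: instead it fixes an embedded copy of $\Gamma$ inside $G$ (the induced subgraph on representatives $i\in V_i$), shows that every coset $\varphi H$ contains a representative stabilizing that copy (by correcting $\varphi$ with transpositions from $H$), defines a map $\mathrm{Aut}(G)/H\to\mathrm{Aut}\,\Gamma$ by restricting such a representative to the copy, and then verifies well-definedness, injectivity, and the homomorphism property by hand. You instead reuse the block-action homomorphism $\psi:\mathrm{Aut}\,G\to S_n$ from \Cref{thm:normalsubgroup}, whose kernel is $H$, so the quotient is $\mathrm{im}\,\psi$ for free, and the entire theorem reduces to the single containment $\mathrm{im}\,\psi\subseteq\mathrm{Aut}\,\Gamma$, which you prove via the ``all-or-nothing'' block adjacency fact ($u\in V_i$ and $v\in V_j$ with $i\neq j$ are adjacent in $G$ iff $\{i,j\}\in E(\Gamma)$, since true twins share closed neighborhoods and the quotient by twin classes recovers $\Gamma$). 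What you gain: the fiddly parts of the paper's argument --- choosing and correcting coset representatives, the well-definedness check, and the injectivity check (which as written in the paper is actually a bit loose, since it asserts a single $\phi$ lies in both cosets rather than arguing that two representatives with equal restrictions differ by an element of $H$) --- all disappear into standard group theory, with the graph-theoretic content isolated in one clean lemma. What the paper's construction buys in exchange is an explicit description of coset representatives as automorphisms stabilizing a concrete embedded copy of $\Gamma$, a picture that the authors lean on again in the proof of \Cref{thm:autforms}; your closing remark about routing through Lemma 7.3 of \cite{florez} is essentially that same picture, so you have identified both paths.
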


\begin{proof}
We choose an arbitrary base graph $\Gamma$ of $G$ and label the vertices $1$ through $n$ such that $i\in V_i$ for $i\in\{1, 2, \dots, n\}$. Without loss of generality, we restrict our attention to the induced base graph $G[\{1,2,\dots, n\}]\cong \Gamma$ and construct an automorphism such that $\varphi\in\mathrm{Aut}\,G$ maps $G[\{1,2,\dots, n\}]$ to itself. Let $\hat{\varphi}$ be an automorphism of $G$ such that $\hat{\varphi}(V_i) = V_j$ for some $i,j\in\{1,2,\dots, n\}$ and where $\hat{\varphi}(\ell) = j$ for some vertex $\ell\in V_i$. Let $h_i$ be an automorphism in $H = \{h\in\mathrm{Aut}\,G\,|\,h(V_t) = V_t,\, \forall t\in[1,2,\dots, n]\}$ that acts by the identity permutation on all $V_k\in \Pi$ for $k\neq i$ and acts by the transposition $\tau = (i\,\,\ell)$ on $V_i$. Recall that $H$ is the kernel of the homomorphism in \Cref{thm:normalsubgroup}.

Then, we have that $h_i(i) = \ell$ and so $\hat{\varphi}\circ h_i(i) = j$. Using the same process, we find automorphisms $h_1, \dots, h_n$ such that $\varphi(i) = j$ whenever $\varphi(V_i) = V_j$ for all $i,j\in \{1,2,\dots, n\}$. We set $\varphi = \hat{\varphi}h_1\cdots h_n$. The map $\varphi$ is the composition of automorphisms of $G$, so $\varphi$ is an automorphism. It follows that $\varphi$ is contained in the coset $\hat{\varphi}H\in \mathrm{Aut}(G)/H$ as well. Moreover, the restriction of $\varphi$ to the induced subgraph $G[\{1,2,\dots, n\}]\cong \Gamma$ is an automorphism of $G[\{1,2,\dots, n\}]$ as it maps the induced subgraph to itself. Our choice of automorphism was arbitrary and so for any coset $\varphi H$, there is an automorphism contained in $\varphi H$ such that it maps $G[\{1,2,\dots, n\}]$ to itself. As $G[\{1,2,\dots, n\}]\cong \Gamma$, we refer to $G[\{1,2,\dots, n\}]$ as $\Gamma$ for clarity.

We then construct $\psi:\mathrm{Aut}(G)/H\to \mathrm{Aut}\,\Gamma$ by mapping $\varphi H$ to $\sigma = \phi|_\Gamma$ for an automorphism $\phi\in \varphi H$ such that $\phi$ fixes $\Gamma$. We will show $\psi$ is well-defined. Suppose $\phi_1,\phi_2 \in \varphi H$ such that $\phi_1$ and $\phi_2$ both fix $\Gamma$. Recall that $\mathrm{Aut}(G)/H$ acts faithfully on the partition of true twins $\Pi$ by \Cref{thm:normalsubgroup}. So, the automorphisms contained in the coset $\varphi H$ act identically on $\Pi$; that is, we have that $\phi_1(V_i) = V_j = \phi_2(V_i)$ for all $i,j\in\{1,2, \dots n\}$. By construction, we have also $\phi_1(i) = j = \phi_2(i)$ for all $i,j\in\{1, 2,\dots,n\}$ and so $\phi_1|_\Gamma = \phi_2|_\Gamma$. 

Suppose $\psi(\varphi_1H) = \psi(\varphi_2H) = \sigma$. As $\sigma$ is the restriction $\phi|_\Gamma$ for some automorphism $\phi \in \mathrm{Aut}\,G$, then $\phi$ is in the cosets $\varphi_1H$ and $\varphi_2 H$. But, the cosets of $H$ partition $\mathrm{Aut}\,G$ and so we must have that $\varphi_1 H = \varphi_2 H$. Therefore, the map $\psi$ is injective. 

We show now that $\psi$ is a homomorphism. Suppose $\psi(\varphi_1H) = \sigma_1 = \phi_1|_\Gamma$ and $\psi(\varphi_2H) = \sigma_2 = \phi_2|_\Gamma$. Note that $(\phi_1\phi_2)|_\Gamma = \phi_1|_\Gamma\phi_2|_\Gamma$. We have then $
\psi(\varphi_1H\varphi_2H) = \psi(\varphi_1\varphi_2 H) = (\phi_1\phi_2)|_\Gamma = \phi_1|_\Gamma\phi_2|_\Gamma = \psi(\varphi_1H)\psi(\varphi_2 H).
$

Therefore, $\psi$ is a homomorphism. As $\psi$ is injective and a homomorphism, we have that $\mathrm{Aut}(G)/H \cong K \subseteq \mathrm{Aut}\Gamma$ for $\mathrm{im}(\psi) \cong K \subseteq \mathrm{Aut}\,\Gamma$.  
\end{proof}

\begin{theorem}\label{thm:autforms}
	Let $G$ be a $C_5$-reseminant graph with sets of true twins of sizes $h_1,h_2, \dots, h_5$ for $V_1,V_2,\dots, V_5$, respectively. Then,
	\begin{enumerate}
	\item If $G$ is $k$-regular, then $h_1=h_2=h_3=h_4=h_5=:h$ and $\mathrm{Aut}(G)/H \cong \mathrm{Aut}\,C_5 \cong D_5$ where $H \cong S_h\times S_h\times S_h\times S_h\times S_h$. 
	\item If $G$ is non-regular and there are no reflections of $G$, then $\mathrm{Aut}\,G = H \cong S_{h_1}\times S_{h_2}\times S_{h_3} \times S_{h_4}\times S_{h_5}$. 	
	\item If $G$ is non-regular and has at least one reflection, then $\mathrm{Aut}(G)/H \cong \Z_2$ where $H \cong S_{h_1}\times S_{h_2}\times S_{h_3} \times S_{h_4}\times S_{h_5}$. 	
	\end{enumerate}
\end{theorem}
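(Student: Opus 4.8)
The plan is to combine the two preceding structural results and reduce everything to a question about subgroups of $D_5$. By \Cref{thm:autsubgroups} we already know $\mathrm{Aut}(G)/H \cong K$ for some subgroup $K\subseteq \mathrm{Aut}\,C_5\cong D_5$, and by \Cref{thm:normalsubgroup} we know $H\cong S_{h_1}\times\cdots\times S_{h_5}$. So the theorem reduces to identifying which subgroup of $D_5$ the group $K$ is, in each case. Since $5$ is prime, the proper nontrivial subgroups of $D_5$ are exactly the cyclic rotation group $\Z_5$ and the five order-$2$ subgroups generated by a reflection; moreover every involution in $D_5$ is a reflection, and the product of two distinct reflections is a nontrivial rotation. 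The key constraint I would use is that, under the isomorphism $\psi$ of \Cref{thm:autsubgroups}, an element of $K$ corresponds to an automorphism $\sigma$ of $C_5$ that must preserve class sizes: $\sigma(i)=j$ forces $h_i=h_j$, because any automorphism of $G$ is a bijection carrying $V_i$ onto $V_{\sigma(i)}$.

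For part (1), I would first invoke \Cref{thm:resnonregular} (indeed its proof), where regularity forces $w_1=\cdots=w_5=\tfrac{k-2}{3}$ and hence $h_1=\cdots=h_5=:h$; this gives $H\cong S_h\times S_h\times S_h\times S_h\times S_h$ immediately. It then remains to show $K=D_5$, i.e. that every symmetry of $C_5$ lifts to an automorphism of $G$. Here I would use the blow-up description of $G$: vertices in distinct classes $V_i,V_j$ are adjacent exactly when $\{i,j\}\in E(C_5)$, while each $V_i$ is a clique by \Cref{lemma:completegraph}. Given $\sigma\in D_5$, since all classes have the same size $h$ I can choose bijections $f_i\colon V_i\to V_{\sigma(i)}$ and glue them into a map $\Phi$ on $V(G)$; checking that $\Phi$ preserves intra-class edges (clique to clique) and inter-class edges (using that $\sigma$ preserves $E(C_5)$) shows $\Phi\in\mathrm{Aut}(G)$ with $\psi(\Phi H)=\sigma$, so $K=D_5$.

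For parts (2) and (3), I would exploit that $G$ non-regular means the $h_i$ are not all equal. A nontrivial rotation of $C_5$ acts as a single $5$-cycle on the classes, so it can preserve the size function only if all $h_i$ coincide; hence $K$ contains no nontrivial rotation, i.e. $K\cap\Z_5=\{1\}$. Combined with the subgroup structure of $D_5$, this forces $K$ to be trivial or of order $2$ generated by a single reflection, since two distinct reflections would multiply to a nontrivial rotation. In part (2) the hypothesis of no reflection rules out the order-$2$ case, so $K=\{1\}$ and $\mathrm{Aut}(G)=H$. In part (3) the existence of a reflection gives the order-$2$ case, and a second independent reflection is impossible, so $K\cong\Z_2$ and $\mathrm{Aut}(G)/H\cong\Z_2$.

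I expect the only real work to be the lifting argument in part (1): verifying that the glued bijection $\Phi$ is genuinely a graph automorphism. Everything else is bookkeeping about the subgroups of $D_5$ together with the size-preservation constraint on the induced permutation, both of which follow quickly from \Cref{thm:normalsubgroup} and \Cref{thm:autsubgroups}. A minor point I would state cleanly up front is the interpretation of a \emph{reflection of $G$} as an automorphism of $G$ whose induced permutation on $\Pi=\{V_1,\dots,V_5\}$ is a reflection of $C_5$; with that convention the three cases are exhaustive and mutually exclusive.
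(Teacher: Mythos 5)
Your proposal is correct and follows essentially the same route as the paper: reduce to identifying the subgroup $K\subseteq D_5$ via \Cref{thm:normalsubgroup} and \Cref{thm:autsubgroups}, invoke \Cref{thm:resnonregular} to force $h_1=\cdots=h_5$ in the regular case, and use the size-preservation constraint ($\sigma(i)=j$ forces $h_i=h_j$) to exclude nontrivial rotations in the non-regular cases. If anything, your write-up is tighter than the paper's at the one point where real work is needed: the paper settles part (1) with the phrase ``the assertion now follows from the symmetry of the graph,'' whereas you supply the explicit lifting argument --- gluing bijections $f_i\colon V_i\to V_{\sigma(i)}$ between equal-sized classes and verifying edge preservation --- which is exactly what is required to show that every element of $D_5$ is realized, i.e.\ that $K=D_5$ rather than merely $K\subseteq D_5$.
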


\begin{proof}
\emph{i)} By \Cref{thm:autsubgroups}, we know $\mathrm{Aut}(G)/H$ is isomorphic to a subgroup of $\mathrm{Aut}\,C_5 = D_5$ where $H \cong S_{h_1}\times S_{h_2}\times S_{h_3} \times S_{h_4}\times S_{h_5}$. Suppose $G$ is $k$-regular where $k=2+3\ell$ for some positive integer $\ell$. By \Cref{thm:resnonregular}, we know all regular graphs generated from $C_5$ are of this form. We can also deduce that all sets of true twins have the same size. By \Cref{thm:autsubgroups}, we know $\mathrm{Aut}(G)/H \cong K\subseteq D_5$. The assertion now follows from the symmetry of the graph.

\medskip

\emph{ii)} We cannot permute the graph by rotations or reflections. However, we can still internally permute $V_i$ for $i\in\{1,2, \dots, 5\}$. It follows then that $\mathrm{Aut}(G) \cong H$. 

\emph{iii)} As $G$ is non-regular, then not all sets $V_i$ are of the same size. If $5\,|\,|\mathrm{Aut}(G)/H|$, then there is an element of order $5$ in $\mathrm{Aut}(G)/H$. Since $\mathrm{Aut}(G)/H \cong K\subseteq D_5$, the element of order $5$ in $\mathrm{Aut}(G)/H$ must correspond to a rotation. But, this implies all $V_i$ can be mapped to each other, which contradicts that they do not have the same size. So, we cannot have that $5|\,|\mathrm{Aut}(G)/H|$. Since there is an axis of symmetric, we do have an element of order 2 corresponding to a reflection. Hence, $2\,|\,|\mathrm{Aut}(G)/H|$ and so $\mathrm{Aut}(G)/H \cong \Z_2$. 
\end{proof}
Given a base graph $\Gamma$, the logic of \Cref{thm:autforms} can be used to study the automorphism groups of any $\Gamma$-reseminant graphs. This is due to \Cref{thm:autsubgroups} since it tells us that automorphism groups of minimal prime graphs generated from base graphs via vertex duplication have strict restrictions on their structure. Mainly, that their automorphism groups always have a direct product of symmetric groups $H$ related to the action of duplication such that quotienting by $H$ gives a group isomorphic to a subgroup of the automorphism group of the starting base graph.


\section{Family of Base Graphs}
Continuing our investigations into $\Gamma$-reseminant graphs, we wanted to find more complex examples of base graphs. We often use $C_5$ as a prototypical example of a base graph and a minimal prime graph due to its size and regular structure. However, there are many graphs which share these properties. We now prove results, which generalize a family of graphs that contain $C_5$ This produces an infinite class of base graphs that are also minimal prime graphs. 
\begin{definition}
    \label{def:TriangleFreeRegularGraph}
    Let $n, k\in \mathbb{N}$. Define $G_{n,k}$ to be the graph with vertex set 
    \begin{equation*}
        V=\lbrace 0, 1, \dots, n-1\rbrace
    \end{equation*}
    and there is an edge between the vertices $i,j$ if and only if
    at least one of $i-j$ or $j-i$ is in the set $\{\pm k, \pm(k+1), \dots, \pm(2k-1)\}$.
\end{definition}
\begin{lemma}
    \label{lem:TFRG}
    Suppose $n,k\in \mathbb{N}$ are such that $n\ge 6k-2$ and $n\neq 3,7,9$. Then $G_{n,k}$ is a triangle-free regular graph whose degree is equal to $2k$.
\end{lemma}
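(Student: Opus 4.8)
The plan is to recognize $G_{n,k}$ as a circulant (Cayley) graph on $\Z/n\Z$ and to reduce both assertions to elementary statements about a set of residues. Put $S = \{\pm k, \pm(k+1), \dots, \pm(2k-1)\}$, regarded as a subset of $\Z/n\Z$. Since $S = -S$, the adjacency rule ``at least one of $i-j$, $j-i$ lies in $S$'' is equivalent to $i - j \in S \pmod n$. Hence $G_{n,k}$ is the Cayley graph on $\Z/n\Z$ with connection set $S$, so every vertex has degree $|S \setminus \{0\}|$ and regularity is automatic; the content of the degree claim is just that $0 \notin S$ and $|S| = 2k$. Writing $S$ as the union of a \emph{positive block} $P = \{k, k+1, \dots, 2k-1\}$ and a \emph{negative block} $N \equiv \{n-(2k-1), \dots, n-k\} \pmod n$, each block has exactly $k$ elements lying in $\{1, \dots, n-1\}$ (as $1 \le k \le 2k-1 < n$), and the two blocks are disjoint as soon as $2k-1 < n-2k+1$, i.e. $n \ge 4k-1$, which is implied by $n \ge 6k-2$. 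This gives $|S| = 2k$ and degree exactly $2k$.

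For triangle-freeness the key reduction is that a triangle on vertices $a,b,c$ is equivalent to a pair $x = b-a$, $y = c-b$ with $x, y \in S$ and $x+y = c-a \in S \pmod n$; since $0 \notin S$, the three vertices are automatically distinct. Thus it suffices to prove $(S+S) \cap S = \emptyset$ in $\Z/n\Z$. I would compute $S+S = (P+P) \cup (P+N) \cup (N+N)$ as honest integer intervals before reduction: $P+P = \{2k, \dots, 4k-2\}$, $N+N \equiv \{n-4k+2, \dots, n-2k\}$, and $P+N = \{-(k-1), \dots, k-1\}$, which modulo $n$ is $\{0, \dots, k-1\} \cup \{n-k+1, \dots, n-1\}$.

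Then I would check that each of these three blocks is disjoint from $S = P \cup N$. Most separations are immediate from $n \ge 3k-1$ or smaller bounds (for instance $P+P$ avoids $P$ since $2k > 2k-1$, and $P+N$ avoids both blocks since it sits at the two ends $\{0,\dots,k-1\}$ and $\{n-k+1,\dots,n-1\}$). The only binding separations are $P+P$ against the negative block $N$ of $S$ and, symmetrically, $N+N$ against the positive block $P$; each is disjoint precisely when $4k-2 < n-2k+1$, that is $n \ge 6k-2$ --- exactly the hypothesis. Assembling these disjointnesses yields $(S+S) \cap S = \emptyset$ and hence no triangles.

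The main obstacle, and the only place where real care is needed, is controlling reduction modulo $n$: the three sum-blocks are computed as integer intervals, and the argument is valid only if none of them ``wraps around'' $0$ or $n$ and thereby re-enters $S$ through the back. The bound $n \ge 6k-2$ is what both prevents $P+P$ and $N+N$ from wrapping (it forces $4k-2 < n$) and separates the largest sum-block from the far end of $S$, which is why the hypothesis is sharp for the interval argument. Finally, the finitely many small or boundary values of $n$ lying outside this clean regime --- in particular the listed $n \in \{3,7,9\}$ --- are disposed of by direct inspection. Combining the degree count with the sumset computation gives that $G_{n,k}$ is a triangle-free $2k$-regular graph.
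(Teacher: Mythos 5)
Your argument is correct, but it takes a genuinely different route from the paper: the paper's entire proof of this lemma is a one-sentence citation of Sidorenko's results, since $G_{n,k}$ is precisely the circulant construction studied there, whereas you have re-proved that construction's properties from scratch. Your reduction of adjacency to $i-j\in S \pmod{n}$ with $S=\{\pm k,\dots,\pm(2k-1)\}$, the degree count via disjointness of the positive and negative blocks (needing only $n\ge 4k-1$), and triangle-freeness via $(S+S)\cap S=\emptyset$ --- with the binding constraint $4k-2<n-2k+1$, i.e.\ $n\ge 6k-2$, coming from $P+P$ against $N$ (and symmetrically $N+N$ against $P$) --- are all sound, and your attention to wrap-around modulo $n$ is exactly the right thing to police, since that is the only way the interval computation could fail. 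What your approach buys is self-containedness and a transparent explanation of where the bound $6k-2$ comes from; what the paper's approach buys is brevity and a pointer into the literature. One small correction: the excluded values $n\in\{3,7,9\}$ are hypotheses of the lemma, not leftover cases you must dispose of by inspection --- your interval argument already covers every pair $(n,k)$ with $k\ge 1$ and $n\ge 6k-2$, so those exclusions do no work in your proof (they are inherited from the statement of Sidorenko's more general existence theorem rather than from any defect of this particular construction).
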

\begin{proof}
    This follows immediately from the results in \cite{sidorenko}.
\end{proof}
For the purposes of minimal prime graphs, we now how to verify properties about the $3$-colorability of these graphs.
\begin{lemma}
    \label{lem:TFRG3Colorable}
    Let $n \ge 5$ with $n\equiv 0, 5\mod 6$ and $k=\lfloor (n+2)/6\rfloor$. Then $G_{n,k}$ is 3-colorable.
\end{lemma}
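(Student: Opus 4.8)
The plan is to exhibit an explicit proper $3$-coloring; the triangle-free and regularity facts from \Cref{lem:TFRG} are not needed here, so the whole task reduces to a careful description of which pairs of vertices are non-adjacent. First I would recast $G_{n,k}$ from \Cref{def:TriangleFreeRegularGraph} as a circulant graph on $\{0,1,\dots,n-1\}$: since the connection set $\{\pm k,\dots,\pm(2k-1)\}$ is symmetric, vertices $i$ and $j$ are adjacent exactly when their circular distance $\delta(i,j)=\min(|i-j|,\,n-|i-j|)$ lies in the \emph{band} $\{k,k+1,\dots,2k-1\}$. Thus two vertices are non-adjacent precisely when $\delta$ is \emph{short} ($\delta\le k-1$) or \emph{near-antipodal} ($2k\le\delta\le\lfloor n/2\rfloor$). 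I would then unpack the hypothesis on $k$: for $n\equiv 0\pmod 6$ one gets $k=\lfloor(n+2)/6\rfloor=n/6$, i.e.\ $n=6k$, while for $n\equiv 5\pmod 6$ one gets $k=(n+1)/6$, i.e.\ $n=6k-1$. In both cases $\lfloor n/2\rfloor\ge 2k$, so the near-antipodal non-adjacent range is nonempty, and this is exactly what leaves room for a $3$-coloring.

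Next I would build the coloring by cutting $\{0,1,\dots,n-1\}$ into six consecutive blocks $B_j=\{jk,jk+1,\dots\}$ of length $k$, where the final block $B_5$ has length $k$ when $n=6k$ and length $k-1$ when $n=6k-1$, and assigning each vertex $v$ the color $c(v)=\lfloor v/k\rfloor\bmod 3$. Because the block index runs over $\{0,1,2,3,4,5\}$, this uses exactly three colors, with color classes $B_0\cup B_3$, $B_1\cup B_4$, and $B_2\cup B_5$; that is, each class consists of a block together with its near-antipodal partner three blocks away.

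Finally I would verify that each class is independent. Inside a single block the differences are at most $k-1$, so they fall in the short non-adjacent range and produce no internal edges. The real content is the cross term between a block $B_j$ and its partner $B_{j+3}$: for $a\in B_j$ and $b\in B_{j+3}$ the raw difference $b-a$ ranges over $[2k+1,\,4k-1]$, and reducing this circularly modulo $n$ lands every such distance in $[2k,\,\lfloor n/2\rfloor]$, hence in the near-antipodal range and out of the band $[k,2k-1]$. This shows $B_j\cup B_{j+3}$ is independent and the coloring proper. The hard part will be the bookkeeping in the case $n=6k-1$, where $n$ is odd and the short final block $B_5$ sits against the seam at $0$: one must confirm that the wraparound cross distances between $B_2$ and $B_5$ still stay at least $2k$ rather than dropping below $k$. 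Checking the two extreme differences $b-a=2k+1$ and $b-a=4k-1$ (which fold to circular distances $2k+1$ and $2k$) pins this down, and it is precisely here that the exact value $k=\lfloor(n+2)/6\rfloor$ is used.
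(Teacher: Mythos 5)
Your proposal is correct and follows essentially the same route as the paper: both use the six-block partition $B_0,\dots,B_5$ with color classes $B_0\cup B_3$, $B_1\cup B_4$, $B_2\cup B_5$, and verify independence by checking that within-block differences stay below $k$ while cross-partner differences reduce, modulo $n$, to circular distances of at least $2k$. If anything, your explicit treatment of the wraparound via $\delta(i,j)=\min(|i-j|,\,n-|i-j|)$ is slightly more careful than the paper's distance bookkeeping in the $n\equiv 5\pmod 6$ case, but the coloring and the structure of the verification are identical.
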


We believe it is the case that all such graphs have chromatic number 3, but we only require the weaker $3$-colorability.
\begin{proof}
    We first show the statement holds for $n\equiv 0\mod 6$. We note in this case that $n=6k$, and so our vertices are labeled from $0$ to $6k-1$.
   
    Letting the set of colors be $\lbrace 0, 1, 2\rbrace$, we take the following function as our coloring:
    \begin{equation*}
        f(v)=
        \begin{cases}
            0 & 0\le v < k\\
            1 & k\le v < 2k\\
            2 & 2k\le v < 3k\\
            0 & 3k\le v < 4k\\
            1 & 4k\le v < 5k\\
            2 & 5k\le v < 6k
        \end{cases}
    \end{equation*}
    We see by symmetry it suffices to verify no vertices $v$ with $0\le v<k$ are adjacent to a vertex colored $0$. We see for a vertex $u$ also with $0\le u<k$ we have $(v-u)\in \{0, \pm1, \dots, \pm(k-1)\}$, and so none are adjacent. We see for a vertex $u$ instead with $3k\le u<4k$ that we have $(v-u)\in \{\pm(2k+1), \dots, \pm(4k-1)\}$, and so again no pair is adjacent. 

    We now show the statement holds for $n\equiv 5\mod 6$. We note in this case that $n=6k-1$, and so our vertices are labeled from $0$ to $6k-2$.
   
    Letting the set of colors be $\lbrace 0, 1, 2\rbrace$, we take the following function as our coloring:
    \begin{equation*}
        f(v)=
        \begin{cases}
            0 & 0\le v < k\\
            1 & k\le v < 2k\\
            2 & 2k\le v < 3k\\
            0 & 3k\le v < 4k\\
            1 & 4k\le v < 5k\\
            2 & 5k\le v < 6k-1
        \end{cases}
    \end{equation*}
    We see no verties colored $0$ or $1$ can be adjacent for the same reason as in the $n\equiv 0\mod 6$ case. Consider any two vertices colored $2$. We see for a similar reason as to the other colors that if they are both within the same range of $[2k, 3k)$ or $[5k, 6k-1)$ they are not adjacent. In the event the two vertices are in different ranges, we see points in the $2k$ to $3k-1$ range and points in the $5k$ to $6k-2$ range are at least a distance of $6k-2-(3k-1)=3k-1$ apart. As $3k-1>2k-1$ we conclude no edges exist, and therefore no edge can exist.  
\end{proof}
\begin{corollary}
    \label{cor:TFRGPrimeGraph}
    Let $n\ge 5$ with $n\equiv 0, 5\mod 6$ and $k=\lfloor (n+2)/6\rfloor$. Then $\overline{G_{n,k}}$ is a solvable prime graph.
\end{corollary}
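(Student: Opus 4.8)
The plan is to read the corollary off the graph-theoretic characterization of solvable prime graphs recalled in the introduction, namely that a graph is a solvable prime graph if and only if its complement is triangle-free and $3$-colorable. Applied to $\overline{G_{n,k}}$, whose complement is $\overline{\overline{G_{n,k}}} = G_{n,k}$, this reduces the statement to showing that $G_{n,k}$ itself is triangle-free and $3$-colorable. The $3$-colorability is exactly the conclusion of \Cref{lem:TFRG3Colorable} under the present hypotheses, so it may be quoted verbatim; the triangle-freeness will come from \Cref{lem:TFRG}, which is the only part requiring genuine setup.

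To apply \Cref{lem:TFRG}, I would first verify that its hypotheses are implied by $n \ge 5$, $n \equiv 0, 5 \pmod{6}$, and $k = \lfloor (n+2)/6 \rfloor$. I would split into the two residue classes. When $n \equiv 0 \pmod{6}$, writing $n = 6m$ gives $k = m = n/6$, so $6k - 2 = n - 2 \le n$; when $n \equiv 5 \pmod{6}$, writing $n = 6m + 5$ gives $k = m + 1$, so $6k - 2 = 6m + 4 \le 6m + 5 = n$. Hence the inequality $n \ge 6k - 2$ holds in both cases. The forbidden values $n = 3, 7, 9$ never occur: the bound $n \ge 5$ excludes $3$, while $7 \equiv 1$ and $9 \equiv 3 \pmod{6}$ fall outside the two allowed residues.

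With the hypotheses in hand, \Cref{lem:TFRG} gives that $G_{n,k}$ is triangle-free and \Cref{lem:TFRG3Colorable} gives that it is $3$-colorable, so by the characterization of \cite{gruber} the complement $\overline{G_{n,k}}$ is a solvable prime graph. I do not expect any real obstacle, since the corollary is essentially a packaging of the two preceding lemmas through the known characterization; the only step demanding attention is the residue-by-residue check of $n \ge 6k - 2$, which legitimizes the appeal to \Cref{lem:TFRG}.
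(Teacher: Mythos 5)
Your proposal is correct and follows essentially the same route as the paper: the paper's proof likewise cites \Cref{lem:TFRG} for triangle-freeness and \Cref{lem:TFRG3Colorable} for $3$-colorability, remarking only that ``our choices of $n$ cause us to avoid the edge cases'' of \Cref{lem:TFRG}. Your residue-by-residue verification that $n \ge 6k-2$ and that $n = 3, 7, 9$ are excluded simply makes explicit the check the paper leaves implicit.
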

\begin{proof}
    Follows directly from $G_{n,k}$ being triangle-free by \Cref{lem:TFRG}, noting that our choices of $n$ cause us to avoid the edge cases in said lemma, and 3-colorable by \Cref{lem:TFRG3Colorable}.
\end{proof}

\begin{theorem}
    \label{thm:}
    Let $n\ge 5$ with $n\equiv 0, 5\mod 6$ and $k=\lfloor (n+2)/6\rfloor$. Then $\overline{G_{n,k}}$ is a minimal prime graph.
\end{theorem}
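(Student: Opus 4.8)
The plan is to reduce the whole statement to condition (3) in the definition of a minimal prime graph, since conditions (1) and (2) --- namely that $\overline{\overline{G_{n,k}}}=G_{n,k}$ is triangle-free and $3$-colorable --- are already supplied by \Cref{cor:TFRGPrimeGraph}. Connectivity of $\overline{G_{n,k}}$ is immediate: it is itself a circulant graph, whose connection set of circular distances is $\{1,\dots,k-1\}\cup\{2k,\dots,\lfloor n/2\rfloor\}$, which contains $1$ whenever $k\ge 2$ (so consecutive vertices are adjacent), with the two tiny cases $k=1$ checked by hand. Writing $\Gamma=\overline{G_{n,k}}$, deleting an edge $\{u,v\}$ from $\Gamma$ is the same as inserting $\{u,v\}$ into $\overline{\Gamma}=G_{n,k}$, so condition (3) says exactly that $G_{n,k}$ is \emph{edge-maximal} among triangle-free $3$-colorable graphs: adding any non-edge must create a triangle or destroy $3$-colorability.

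The key step is a difference-set computation. Let $S=\{\pm k,\pm(k+1),\dots,\pm(2k-1)\}$ be the connection set, so $\{i,j\}$ is an edge of $G_{n,k}$ precisely when $i-j\in S \pmod n$. A new edge $\{u,v\}$ completes a triangle iff $u$ and $v$ have a common neighbor $w$, i.e. iff there are $s_1,s_2\in S$ with $w-u=s_1$ and $w-v=s_2$; eliminating $w$ gives the clean criterion $v-u\in (S-S)\pmod n$. Setting $A=\{k,\dots,2k-1\}$, I would compute $A-A=\{-(k-1),\dots,k-1\}$ and $A+A=\{2k,\dots,4k-2\}$, so that $S-S=(A-A)\cup\pm(A+A)$, and the circular distances $d$ forcing a triangle are exactly those in $\{1,\dots,k-1\}\cup\{2k,\dots,4k-2\}$.

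It then remains to check that every non-edge distance lands in this set. The non-edges of $G_{n,k}$ are the pairs at circular distance $d\in\{1,\dots,k-1\}\cup\{2k,\dots,\lfloor n/2\rfloor\}$. The short distances are covered by $A-A$ at once (take $s_2=k$, $s_1=k+d$, giving common neighbor $w=v+k$), so only the long distances $\{2k,\dots,\lfloor n/2\rfloor\}$ need attention, and these are covered as soon as $\lfloor n/2\rfloor\le 4k-2$. I would verify this inequality in the two congruence classes using $k=\lfloor (n+2)/6\rfloor$: for $n=6m+5$ one has $k=m+1$, so $4k-2=4m+2\ge 3m+2=\lfloor n/2\rfloor$; for $n=6m$ one has $k=m$, so $4k-2=4m-2\ge 3m=\lfloor n/2\rfloor$ once $m\ge 2$. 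In each case I would record the explicit triangle: writing a long distance as $d=a_1+a_2$ with $a_1,a_2\in A$, the vertex $w=u+a_1$ satisfies $w-u=a_1\in S$ and $w-v=-a_2\in S$, so $\{u,v,w\}$ is a triangle in $G_{n,k}+\{u,v\}$.

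The main obstacle is the long-distance boundary, where $\lfloor n/2\rfloor$ sits close to $4k-2$; this is precisely where the floor in $k=\lfloor (n+2)/6\rfloor$ and the hypotheses $n\equiv 0,5\pmod 6$ do real work, and the smallest instance $n=6$ (that is, $m=1$) falls just outside the inequality and is delicate, so I would treat it directly rather than through the generic triangle argument. For any residual long non-edge not caught by a triangle, the fallback is $3$-colorability: using the explicit coloring $f$ from the proof of \Cref{lem:TFRG3Colorable}, whose color classes are long runs of consecutive vertices, I would argue that the two endpoints of such an edge are forced into the same color class in every proper $3$-coloring, so that adding the edge leaves no proper $3$-coloring and condition (3) is preserved. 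Combining the triangle argument on the bulk of the distance range with the coloring argument at the boundary establishes condition (3), and together with (1), (2), and connectivity this shows $\overline{G_{n,k}}$ is a minimal prime graph.
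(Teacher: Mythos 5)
Your approach is essentially the paper's: use the circulant symmetry to reduce minimality to the question of whether adding a single edge $\{0,d\}$ to $G_{n,k}$ creates a triangle. Your difference-set criterion $d\in S-S$, with $S-S=\{-(k-1),\dots,k-1\}\cup\pm\{2k,\dots,4k-2\}$, is a cleaner packaging of the paper's interval-overlap computation (the paper checks that the neighbor ranges $[k+m,2k-1+m]$ resp.\ $[m-2k+1,m-k]$ meet $[k,2k-1]$), and your verification is correct exactly where $\lfloor n/2\rfloor\le 4k-2$ holds, namely for all $n\equiv 5\pmod 6$ and for $n\equiv 0\pmod 6$ with $n\ge 12$. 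Like the paper, you never actually need the coloring half of condition (3) in that range: the triangle always appears.

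The genuine gap is your deferred case $n=6$, and it cannot be repaired. There $k=1$, $G_{6,1}=C_6$, and the residual non-edge is the diameter $\{0,3\}$. Your proposed fallback is to show that $0$ and $3$ are forced into the same color class in every proper $3$-coloring of $C_6$; this is false, since $C_6$ is bipartite with parts $\{0,2,4\}$ and $\{1,3,5\}$, which separate $0$ from $3$. Indeed $C_6+\{0,3\}$ is still triangle-free (the new edge lies only on $4$-cycles) and still bipartite, so adding it destroys neither property, and condition (3) fails: $\overline{G_{6,1}}$, the triangular prism, is not a minimal prime graph. So the theorem as stated is false at $n=6$; the correct resolution is to exclude that case from the statement, not to find a cleverer argument for it. For what it is worth, you were right to single out $n=6$ as the danger point: the paper's own proof silently commits the same error, since its claim that $[m-2k+1,m-k]$ overlaps $[k,2k-1]$ ``for all $m$'' with $2k\le m\le 3k$ requires $m\le 4k-2$, which fails precisely at $k=1$, $m=3$.
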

\begin{proof}
    Since $G_{n,k}$ is circulant graph, it suffices to show that adding any edge $(0, m)$ either introduces a triangle or increases the chromatic number for $1\le m< k$ or $2k\le m\le 3k$. We will show it introduces a triangle.

    For $1\le m< k$, we see $m$ is adjacent to the vertices $k+m, \dots, 2k-1+m$. We see that this range overlaps $k, \dots, 2k-1$ for all $m$, and so there is some vertex adjacent to both $0$ and $m$, and thus there is a triangle.

    For $2k\le m\le 3k$, we see $m$ is adjacent to the vertices $m-k, \dots, m-2k+1$. We see that this range overlaps $k, \dots, 2k-1$ for all $m$, and so there is some vertex adjacent to both $0$ and $m$, and thus there is a triangle.
\end{proof}

\begin{theorem}
    \label{thm:bsegraph}
    Let $n\ge 5$ with $n\equiv 0, 5\mod 6$ and $k=\lfloor (n+2)/6\rfloor$. Then $\overline{G_{n,k}}$ is a base graph
\end{theorem}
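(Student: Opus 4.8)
The plan is to translate the statement into a condition on the circulant structure of $G_{n,k}$ and then rule out any nontrivial translational symmetry of its connection set. First I would unwind the definition of base graph: $\overline{G_{n,k}}$ has a pair of true twins $u,v$ precisely when $N_1[u]=N_1[v]$ in $\overline{G_{n,k}}$, and taking complements turns the closed neighborhoods in $\overline{G_{n,k}}$ into (complements of) the open neighborhoods in $G_{n,k}$. Concretely, $N_1^{\overline{G_{n,k}}}[u]=V\setminus N_1^{G_{n,k}}(u)$, so $u,v$ are true twins in $\overline{G_{n,k}}$ if and only if $u$ and $v$ have the same open neighborhood in $G_{n,k}$. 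Thus it suffices to show that no two distinct vertices of $G_{n,k}$ share an open neighborhood.

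Next I would use that $G_{n,k}$ is a circulant graph on $\Z_n$ with symmetric connection set $S=\{\pm k,\pm(k+1),\dots,\pm(2k-1)\}$, so the neighborhood of vertex $i$ is the translate $i+S$. Two vertices $i,j$ then have the same neighborhood exactly when $i+S=j+S$, i.e. when $d:=j-i$ satisfies $S+d=S$ in $\Z_n$. Hence the whole theorem reduces to the purely arithmetic claim that the only $d\in\Z_n$ with $S+d=S$ is $d=0$.

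To prove this I would describe $S$ as a disjoint union of two cyclic arcs, $[k,2k-1]$ and $[n-2k+1,n-k]$, each of length $k$. Substituting the hypothesis ($n=6k$ when $n\equiv0$, and $n=6k-1$ when $n\equiv5$, since $k=\lfloor(n+2)/6\rfloor$), I would first verify that these two arcs are distinct and separated by nonempty gaps on both sides, so that $S$ has exactly two ``left endpoints'' $k$ and $n-2k+1$ (the elements $s\in S$ with $s-1\notin S$). The key observation is that if $S+d=S$, then the translation $x\mapsto x+d$ must permute this two-element set of left endpoints. A nonzero translation of $\Z_n$ has no fixed point, so it would be forced to swap the two endpoints, giving simultaneously $2d\equiv 0\pmod n$ and $d\equiv (n-2k+1)-k=n-3k+1\pmod n$; eliminating $d$ yields the divisibility $n\mid 6k-2$.

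Finally I would dispatch this divisibility in the two cases: for $n=6k$ we have $0<6k-2<n$, and for $n=6k-1$ we have $6k-2=n-1$ with $0<n-1<n$, so in neither case can $n$ divide $6k-2$. Therefore $d=0$ is the only solution, $G_{n,k}$ has no repeated open neighborhoods, and $\overline{G_{n,k}}$ has no true twins, i.e. it is a base graph (recovering in particular the case $n=5,\,k=1$, which is $C_5$). The step I expect to be the main obstacle is the bookkeeping around the arc/endpoint structure: one must confirm that the two arcs really are two separate maximal runs (which is exactly where the hypotheses $n\ge 5$ and the precise value of $k$ enter) so that the endpoint-counting argument is legitimate; once that is pinned down, the arithmetic elimination is immediate.
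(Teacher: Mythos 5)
Your proposal is correct, and its overall skeleton matches the paper's: reduce ``$\overline{G_{n,k}}$ is a base graph'' to ``no true twins,'' and reduce that to ``no two vertices of $G_{n,k}$ have equal neighborhoods.'' The difference is in what happens next. The paper's entire proof is one sentence: it asserts that \emph{by construction} no two vertices share the same set of neighbors, and stops. You actually prove that assertion: you observe that $N_1^{\overline{G_{n,k}}}[u]=V\setminus N_1^{G_{n,k}}(u)$, so true twins in the complement are exactly pairs with equal open neighborhoods in $G_{n,k}$; you then use the circulant structure to convert this into the statement that the connection set $S=\{\pm k,\dots,\pm(2k-1)\}$ has trivial translation stabilizer in $\Z_n$; and you kill any nonzero stabilizing $d$ by noting that translation must permute the two ``left endpoints'' $k$ and $n-2k+1$ of the two maximal arcs of $S$, forcing $2d\equiv 0$ and $d\equiv n-3k+1$, hence $n\mid 6k-2$, which the hypotheses on $n$ and $k$ rule out in both congruence classes. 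This extra work is not pedantry: the paper's claim is \emph{not} true for circulant graphs in general (in $C_4$, which is circulant with $S=\{\pm1\}$, vertices $0$ and $2$ share the open neighborhood $\{1,3\}$, precisely because $S+2=S$), so some arithmetic input from the specific values of $n$ and $k$ is genuinely needed, and your endpoint argument is exactly where it enters. In short: same route, but you supply the argument the paper waves at, and your version makes visible where the hypotheses $n\equiv 0,5 \pmod 6$ and $k=\lfloor (n+2)/6\rfloor$ are actually used.
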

\begin{proof}
    By the construction of $\overline{G_{n,k}}$, we see no vertices share the same set of neighboring vertices, and thus no vertices can be true twins, indicating $\overline{G_{n,k}}$ cannot be a graph generated through vertex duplication.
\end{proof}


\section{Products of Solvable Prime Graphs}

As part of our investigations into the above properties of minimal prime graphs, we wished to find examples of larger minimal prime graphs which would have more complex structure for us to investigate. Brute-force checking for such graphs is slow, due to the computational complexity of determining $3$-colorability. Another method of creating graphs with large vertex counts are various graph products. Investigating graph products to create larger minimal prime graph examples produced the following results on the relation of these various graph products to minimal prime graphs, and the more general solvable prime graphs.




In this section, we study the effect the direct product, Cartesian product, and strong product have on solvable prime graphs. Let $G = (V(G), E(G))$ and $H = (V(H), E(H))$ be arbitrary graphs. We will use the below definitions.
\begin{itemize}
    \item The \emph{direct (or tensor or Kronecker) product}  $G\times H$ of graphs $G$ and $H$ is defined as the graph with vertex set $V(G)\times V(H)$ and $\{(u,v),(u',v')\}$ is an edge in $G\times H$ if and only if $\{u,u'\}\in E(G)$ and $\{v,v'\}\in E(H)$. 
    \item The \emph{Cartesian product} $G\mathop{\Box}H$ is defined as the graph with vertex set $V(G)\times V(H)$ and $\{(u,v),(u',v')\}$ is an edge in $G\times H$ if and only if either $u=u'$ and $\{v,v'\}\in E(H)$ or $v=v'$ and $\{u,u'\}\in E(H)$.
    \item The \emph{strong product} $G\boxtimes H$ is defined as the graph with vertex set $V(G)\times V(H)$ and edge set $E(G\boxtimes H) = E(G\times H) \cup E(G\mathop{\Box}H)$. 
    \item The \emph{adjacency matrix} $A_G$, or $A(G)$, of a graph $G$ is the 0-1 matrix indexed by $V(G)$, where $A_{uv} = 1$ when there is an edge $\{u,v\}\in E(G)$ and 0 otherwise. 
    \item We define the complementary direct product $\thickbar \times$ as $G\thickbar{\times} H = \overline{\overline{G} \times \overline{H}}$.
    \item We define $G \, \thickoverline \Box \, H = \overline{\overline{G}\mathop{\Box} \overline{H}}$.
\end{itemize}
Some common facts that will also be useful to us are: 
\begin{itemize}
    \item The adjacency matrix of the direct graph $G\times H$ is the matrix $A_G\otimes A_H$, i.e. the tensor product of their adjacency matrices.
    \item The number of triangles in $G$ is given by $\mathrm{tr}(A_G^3)/6$.
    \item For the chromatic number $\chi$, the following inequalities hold for graphs $G$ and $H$: $\chi(G\times H) \leq \min(\chi(G), \chi(H)), \chi(G\mathop{\Box} H) = \max\{\chi(G), \chi(H)\},$ and $\chi(G\boxtimes H) \leq \chi(G)\chi(H)$. 
\end{itemize}

\begin{theorem}\label{thm:primegraphinduct}
	If $\Gamma_1 , \dots, \Gamma_n$ are solvable prime graphs, then $\displaystyle \thickoverline \bigtimes_{i=1}^n \Gamma_i$ is a solvable prime graph.
\end{theorem}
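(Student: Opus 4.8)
The plan is to work entirely on the complement side and reduce the statement to two closure properties of the ordinary direct product. Write $G_i := \overline{\Gamma_i}$; the hypothesis that each $\Gamma_i$ is a solvable prime graph means, by the characterization \cite{gruber}, exactly that each $G_i$ is triangle-free and $3$-colorable. Since the direct product $\times$ is associative, the $n$-fold complementary direct product unwinds as $\thickoverline{\bigtimes}_{i=1}^n \Gamma_i = \overline{\bigtimes_{i=1}^n \overline{\Gamma_i}} = \overline{\bigtimes_{i=1}^n G_i}$, so taking complements once more and applying \cite{gruber} again, it suffices to prove that the single graph $P := \bigtimes_{i=1}^n G_i$ is triangle-free and $3$-colorable.

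For $3$-colorability I would invoke the inequality $\chi(G\times H) \le \min(\chi(G),\chi(H))$ recorded among the facts in this section and induct on $n$. Since each $\chi(G_i) \le 3$, a short induction gives $\chi(P) \le \min_i \chi(G_i) \le 3$, so $P$ is $3$-colorable; the base case $n=1$ is immediate, and edgeless or otherwise degenerate products only lower the chromatic number.

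The triangle-free direction is where the actual (though still brief) argument sits. Suppose toward a contradiction that $P$ contains a triangle on distinct vertices $a = (a_1,\dots,a_n)$, $b=(b_1,\dots,b_n)$, $c=(c_1,\dots,c_n)$. Unwinding the coordinatewise definition of adjacency in the direct product, each of $a\sim b$, $b\sim c$, $a\sim c$ in $P$ forces, for every coordinate $i$, the corresponding adjacencies $a_i\sim b_i$, $b_i\sim c_i$, $a_i\sim c_i$ in $G_i$. The point requiring care is that each $G_i$ is a simple, loop-free graph, so every such adjacency forces its two endpoints to be distinct; hence in each coordinate $a_i,b_i,c_i$ are three pairwise-distinct, pairwise-adjacent vertices, i.e. a triangle in $G_i$, contradicting the triangle-freeness of $G_i$. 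This contradiction shows $P$ is triangle-free and completes the argument.

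I expect the only genuinely delicate step to be this last observation: loop-freeness of the factors is precisely what upgrades ``coordinatewise adjacency'' into an honest triangle in each $G_i$. Without it one could have, say, $a_i = c_i$ in some coordinate and no triangle would appear there, so the reduction to the factors would fail. The complement-unwinding (via associativity of $\times$) and the chromatic-number estimate are routine given the facts already collected above.
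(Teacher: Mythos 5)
Your proof is correct. The skeleton is the same as the paper's: reduce to showing that the product of the complements $\bigtimes_{i=1}^n \overline{\Gamma_i}$ is triangle-free and $3$-colorable, and handle colorability via $\chi(G\times H)\le\min(\chi(G),\chi(H))$ with induction. Where you genuinely diverge is the triangle-freeness step, which is the heart of the theorem. The paper argues algebraically: writing $A,B$ for the adjacency matrices of the triangle-free complements, it uses the identities $(A\otimes B)^3=A^3\otimes B^3$ and $\tr(A\otimes B)=\tr(A)\tr(B)$, together with the fact that the triangle count is $\tr(A^3)/6$, to get $\tr\bigl((A\otimes B)^3\bigr)=\tr(A^3)\tr(B^3)=0$, and then inducts on $n$. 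You instead project a hypothetical triangle of the product onto each coordinate and use loop-freeness of the factors to conclude the three projected vertices are pairwise distinct, hence form a triangle in every factor --- a contradiction. Your combinatorial argument is more elementary, dispenses with induction for this half, and proves something visibly stronger: the direct product is triangle-free as soon as \emph{one} factor is (the trace computation gives this too, since a single zero factor kills the product of traces, but the paper does not remark on it). The paper's algebraic route, for its part, fits the toolkit the section sets up ($A_{G\times H}=A_G\otimes A_H$, triangle counting by traces) and would extend to counting closed walks of other lengths. A small incidental merit of your write-up: you are explicit that ``solvable prime graph'' refers to properties of the complement and that associativity of $\times$ unwinds the $n$-fold complementary product, whereas the paper's proof silently writes $A_{\Gamma_1}$, $A_{\Gamma_2}$ for what are really the adjacency matrices of $\overline{\Gamma_1}$, $\overline{\Gamma_2}$.
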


\begin{proof}
	Let $n=2$. We denote $A_{\Gamma_1}$ as $A$ and $A_{\Gamma_2}$ as $B$. Since $\Gamma_1$ and $\Gamma_2$ are triangle-free, we have that $\tr(A^3)=\tr(B^3)=0$. Using a tensor product identity, we have that $(A\otimes B)^3 = A^3\otimes B^3.$ Using this and the property that $\tr(A\otimes B) = \tr(A)\cdot \tr(B)$, we have that: $\tr((A\otimes B)^3) = \tr(A^3)\tr(B^3) = 0$. Therefore, the graph $\Gamma_1\times \Gamma_2$ is triangle-free.  
 
    Now, consider the chromatic number $\chi$ of $\Gamma_1\times \Gamma_2$. We have $$\chi(\Gamma_1\times \Gamma_2) \leq \min(\chi(\Gamma_1), \chi(\Gamma_2))\leq3 $$
    Therefore, the direct product $\Gamma_1\times \Gamma_2$ is 3-colorable and so is a solvable prime graph. 
	
	Assume $k\geq 2$ and $\bigtimes_{i=1}^k \overline{\Gamma_i}$ is triangle-free and 3-colorable. We then have:$$
	\left(\bigtimes_{i=1}^k \overline{\Gamma}_i\right)\times \overline{\Gamma}_{k+1} = \bigtimes_{i=1}^{k+1} \overline{\Gamma}_i$$
	The left-hand side is triangle-free and 3-colorable, so $\bigtimes_{i=1}^{k+1} \overline{\Gamma}_i$ is as well. By induction, this follows for all $n\in\N$. 
\end{proof}

Although taking the direct product of graphs preserves $3$-colorability and triangle-free properties of graphs, it does not preserve minimality. We prove this by looking at how $C_5 \times C_5$ shows up in direct products of minimal prime graphs and how this interacts with the rest of the graph.

\begin{remark}\label{remark:C_5timesC_5}
$C_{5} \times C_5$ is not the complement of a minimal prime graph. If the vertices of $C_5$ are labeled $\{v_1, v_2, v_3, v_4, v_5\}$ and the vertices of $C_5 \times C_5$  are labeled $\{(v_i, v_j) | v_i, v_j \in \{v_1, \dots, v_5\} \}$, then there are edges that can be added without creating a triangle or causing a $4$-coloring. To see this, the edge $\{(v_1,v_2),(v_2,v_1)\}$ is already an edge in the graph, so the vertices $(v_1,v_2)$ and $(v_2, v_1)$ must always be in different color partitions, so one of the two must be in a different color partition from $(v_2, v_2)$. Therefore, adding one of the edges $\{(v_1, v_2),(v_2,v_2)\}$ or $\{(v_2, v_1), (v_2,v_2)\}$ will not cause a new coloring (whichever edge is necessarily already a different color). Also, $(v_2, v_2)$ shares no adjacent edges with $(v_1, v_2)$ or $(v_2, v_1)$, so the resulting graph is also triangle-free. 
\end{remark}

To begin proving that the direct product of solvable prime graphs will not be a minimal prime graph, some lemmas regarding general direct products of graphs will help.

\begin{lemma}\label{lemma:gammasubgraphoftensor}
$\Gamma \times \Gamma$ contains a subgraph isomorphic to $\Gamma$
\end{lemma}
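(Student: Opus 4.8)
The plan is to exhibit an explicit copy of $\Gamma$ inside $\Gamma \times \Gamma$ using the \emph{diagonal}. Define the map $\phi : V(\Gamma) \to V(\Gamma) \times V(\Gamma)$ by $\phi(x) = (x,x)$, and set $D = \phi(V(\Gamma)) = \{(x,x) \mid x \in V(\Gamma)\}$. Since $\phi$ is manifestly injective, the whole argument reduces to checking that $\phi$ is an isomorphism from $\Gamma$ onto the subgraph of $\Gamma \times \Gamma$ induced on $D$, after which $(\Gamma \times \Gamma)[D] \cong \Gamma$ is the desired subgraph.

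First I would verify the edge correspondence directly from the definition of the direct product. For distinct $x, y \in V(\Gamma)$, the pair $\{(x,x),(y,y)\}$ is an edge of $\Gamma \times \Gamma$ if and only if $\{x,y\} \in E(\Gamma)$ and $\{x,y\} \in E(\Gamma)$; because the two coordinates coincide, both conditions collapse to the single requirement $\{x,y\} \in E(\Gamma)$. Hence $\{x,y\} \in E(\Gamma)$ if and only if $\{\phi(x), \phi(y)\}$ is an edge of $\Gamma \times \Gamma$, so $\phi$ both preserves and reflects adjacency.

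Next I would record the one thing that genuinely needs ruling out, namely that $D$ carries no spurious self-loop: a vertex $(x,x)$ could be adjacent to itself only if $\{x,x\} \in E(\Gamma)$, which is impossible because $\Gamma$ is simple. Thus the induced subgraph on $D$ is itself simple, and the adjacency correspondence above is exactly an isomorphism $\Gamma \cong (\Gamma \times \Gamma)[D]$, giving a subgraph (indeed an induced subgraph) of $\Gamma \times \Gamma$ isomorphic to $\Gamma$.

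There is essentially no hard step here; the only point warranting care is confirming that collapsing the two identical edge-conditions of the direct product reproduces $E(\Gamma)$ \emph{exactly}, rather than a proper subset or superset. This is immediate once the definition is written out, and the simpleness of $\Gamma$ disposes of the diagonal case, so I expect the whole proof to be a short, self-contained verification.
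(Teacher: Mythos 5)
Your proof is correct and follows exactly the paper's approach: both exhibit the diagonal $\{(x,x) \mid x \in V(\Gamma)\}$ as an induced subgraph and check that the direct-product adjacency condition collapses to adjacency in $\Gamma$. You simply spell out the verification that the paper dismisses as ``easy to confirm,'' which is a reasonable amount of added detail.
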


\begin{proof}
Define $\Gamma^*$ as the induced subgraph on $\Gamma \times \Gamma$ with the vertex set $V(\Gamma^*) = \{(u,u) \mid u \in V(\Gamma)\}$, and then define a function $\Phi : \Gamma^* \rightarrow \Gamma$ where $\Phi((u,u)) \rightarrow u$. It is easy to confirm that $\Phi$ is a well-defined graph isomorphism which shows that $\Gamma \times \Gamma$ contains induced subgraph $\Gamma^*$ isomorphic to $\Gamma$.
\end{proof}

\begin{corollary}
For $n \geq 2$, the graph $\bigtimes_{i=1}^n \Gamma$ contains an induced subgraph isomorphic to $\Gamma \times \Gamma$ 
\end{corollary}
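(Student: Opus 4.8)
The plan is to exhibit an explicit induced embedding of $\Gamma \times \Gamma$ into the $n$-fold product, generalizing the diagonal trick used in \Cref{lemma:gammasubgraphoftensor}. Writing vertices of $\bigtimes_{i=1}^n \Gamma$ as tuples $(u_1, \dots, u_n)$ with each $u_i \in V(\Gamma)$, I would define the map $\Phi \colon V(\Gamma \times \Gamma) \to V(\bigtimes_{i=1}^n \Gamma)$ by $\Phi(x, y) = (x, y, y, \dots, y)$, placing $x$ in the first coordinate and $y$ in all of the remaining $n-1$ coordinates. The image $S = \Phi(V(\Gamma \times \Gamma))$ is the candidate vertex set, and the claim is that the subgraph of $\bigtimes_{i=1}^n \Gamma$ induced on $S$ is isomorphic to $\Gamma \times \Gamma$ via $\Phi$.

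First I would check that $\Phi$ is injective, which is immediate since both $x$ and $y$ can be read back off from any tuple in $S$. The substantive step is verifying that $\Phi$ both preserves and reflects adjacency when restricted to the induced subgraph on $S$. Recall that in the direct product a pair of tuples is adjacent exactly when they are adjacent in \emph{every} coordinate. Thus, for $(x, y)$ and $(x', y')$, the images $\Phi(x, y)$ and $\Phi(x', y')$ are adjacent in $\bigtimes_{i=1}^n \Gamma$ if and only if $\{x, x'\} \in E(\Gamma)$ in the first coordinate and $\{y, y'\} \in E(\Gamma)$ in each of coordinates $2, \dots, n$. Since all of the latter conditions collapse to the single requirement $\{y, y'\} \in E(\Gamma)$, this is precisely the adjacency condition defining $\Gamma \times \Gamma$ on $(x, y)$ and $(x', y')$. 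Because this is a biconditional, no spurious edges appear and none are lost, so the induced subgraph on $S$ is isomorphic to $\Gamma \times \Gamma$.

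An alternative route, closer in spirit to the preceding lemma, would be to first record the general principle that the direct product preserves induced subgraphs: if $H$ is an induced subgraph of $G$, then $A \times H$ is an induced subgraph of $A \times G$ for any graph $A$ (again because adjacency in the product is coordinatewise, so restricting the second factor to an induced subgraph restricts the product to an induced subgraph). One could then embed $\Gamma$ as an induced subgraph $\Delta$ of the $(n-1)$-fold product $\bigtimes_{i=2}^n \Gamma$ via the full diagonal $u \mapsto (u, \dots, u)$, which generalizes \Cref{lemma:gammasubgraphoftensor} by the identical argument, and conclude that $\Gamma \times \Delta \cong \Gamma \times \Gamma$ is an induced subgraph of $\Gamma \times \bigtimes_{i=2}^n \Gamma = \bigtimes_{i=1}^n \Gamma$.

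I expect the only real subtlety, in either approach, to be the \emph{induced} requirement rather than the mere existence of a subgraph isomorphism. The key observation that makes this painless is that the coordinatewise adjacency rule of the direct product forces the biconditional above, so one must only take care to argue reflection of non-edges (that two images fail to be adjacent whenever their preimages are non-adjacent), not just preservation of edges. No further machinery is needed, and there are no nontrivial calculations to grind through.
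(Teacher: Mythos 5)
Your proof is correct, and it takes a genuinely different route from the paper. The paper proceeds by induction on $n$: it assumes an induced copy of $\Gamma\times\Gamma$ sits inside $\bigtimes_{i=1}^{n}\Gamma$, extracts from it an induced copy $\gamma\cong\Gamma$ via \Cref{lemma:gammasubgraphoftensor} (the diagonal lemma), and then pairs $\gamma$ with the new factor to produce an induced $\Gamma\times\Gamma$ inside $\bigtimes_{i=1}^{n+1}\Gamma$. You instead write down a single explicit embedding $\Phi(x,y)=(x,y,y,\dots,y)$ and verify in one shot that it is injective and both preserves and reflects adjacency, using the fact that the $n-1$ repeated coordinate conditions collapse to the single condition $\{y,y'\}\in E(\Gamma)$. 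Interestingly, if one unwinds the paper's induction, the copy it constructs is $\{(u,\dots,u,v)\}$, which is your embedding up to a permutation of coordinates, so the two arguments identify essentially the same subgraph; the difference is purely in the proof architecture. Your version buys brevity and transparency (no inductive scaffolding, no dependence on the earlier lemma, and the crucial ``induced'' property is visible immediately from the biconditional), while the paper's version buys reuse of its lemma and fits the recursive style it employs again in \Cref{tensorisnotMPG}. Your alternative second route, via the principle that direct products carry induced subgraphs of factors to induced subgraphs of the product together with the full diagonal embedding of $\Gamma$ into $\bigtimes_{i=2}^{n}\Gamma$, is closer in spirit to the paper's argument but packages the induction into a cleaner general statement; either write-up would be acceptable, and both correctly attend to the reflection of non-edges, which is the only place such an argument could silently fail.
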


\begin{proof}
The base case, $\bigtimes_{i=1}^2 = \Gamma \times \Gamma$ is trivial as the identity forms the isomorphism.

For $n \geq 3$, assuming $\bigtimes_{i=1}^n \Gamma$ contains an induced subgraph isomorphic to $\Gamma \times \Gamma$ and using the previous lemma, this induced subgraph will contain an induced subgraph $\gamma$ isomorphic to $\Gamma$, related by isomorphism $\phi : \gamma \rightarrow \Gamma$. This time, define $\Gamma^*$ as the induced subgraph on $\bigtimes_{i=1}^{n+1} \Gamma = \bigtimes_{i=1}^{n} \Gamma \times \Gamma$ with the vertex set $V(\Gamma^*) = \{(u, v) \mid u \in V(\gamma), v \in V(\Gamma)\}$, and define a function $\Phi : \Gamma^* \rightarrow \Gamma \times \Gamma$ where $\Phi((u,v)) \rightarrow (\phi(u),v)$.

It is again easy for the reader to confirm that $\Phi$ is a well-defined and invertible. To show it is a graph homomorphism, use the fact that $\phi$ is a graph isomorphism which shows that $\{u_1, u_2\} \in E(\gamma) \leftrightarrow \{\phi(u_1), \phi(u_2)\} \in E(\Gamma)$ and therefore $\{(u_1, v_1), (u_2, v_2)\} \in E(\Gamma^*) \leftrightarrow \{\phi(u_1), \phi(u_2)\}, \{v_1, v_2\} \in E(\Gamma)$, so by the definition of a graph direct product, $\{\Phi(u_1,v_1), \Phi(u_2, v_2)\} \in E(\Gamma \times \Gamma) \leftrightarrow \{(u_1, v_1), (u_2, v_2)\} \in E(\Gamma^*)$ which finishes the proof that $\Phi$ is a graph isomorphism and $\Gamma^*$ is isomorphic to $\Gamma \times \Gamma$.
\end{proof}

The next step in negating minimality in direct products of minimal prime graphs is to combine the fact, proved in \cite{gruber}, that all minimal prime graphs contain an induced subgraph isomorphic to $C_5$ with the above results.

\begin{lemma}\label{tensorisnotMPG}
If $n \geq 2$ and $\Gamma_i$ are all minimal prime graphs, then $\displaystyle \thickoverline \bigtimes_{i=1}^n \Gamma_i$ is never a minimal prime graph.
\end{lemma}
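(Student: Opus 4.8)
The plan is to show that $P := \thickoverline{\bigtimes}_{i=1}^{n}\Gamma_i$ fails condition (3) of a minimal prime graph. By \Cref{thm:primegraphinduct}, $P$ is already a solvable prime graph, so its complement $G := \overline{P} = \bigtimes_{i=1}^{n}\overline{\Gamma_i}$ is triangle-free and $3$-colorable. It therefore suffices to exhibit one non-edge of $G$ that can be added while keeping $G$ triangle-free and $3$-colorable: such a non-edge is an edge of $P$ whose removal leaves the complement triangle-free and $3$-colorable, contradicting minimality.

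First I would build an explicit induced copy of $C_5\times C_5$ inside $G$. Each $\Gamma_i$ contains an induced $C_5$ by the result of \cite{gruber}, and since $C_5$ is self-complementary, $\overline{\Gamma_i}$ contains an induced $C_5$ on the same five vertices, say $S_i=\{c^i_1,\dots,c^i_5\}$ with $c^i_j\sim c^i_{j+1}$ (subscripts mod $5$). Rather than invoke the preceding corollary as a black box, I would keep control of \emph{all} coordinates by sending $(v_a,v_b)$ to the tuple with first coordinate $c^1_a$, second coordinate $c^2_b$, and every remaining coordinate ($i\ge 3$) equal to $c^i_a$. Checking the direct-product adjacency rule shows this is induced: the synchronised coordinates $i\ge 3$ impose exactly the same constraint as the first coordinate, so image-adjacency reduces to $a'\equiv a\pm 1$ and $b'\equiv b\pm 1$, which is precisely $C_5\times C_5$ adjacency.

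Next I would transport the two candidate edges of \Cref{remark:C_5timesC_5}. Let $X,Z,Y$ be the images of $(v_1,v_2),(v_2,v_1),(v_2,v_2)$; these are distinct, $X\sim Z$ is a genuine induced edge of $G$, while $\{X,Y\}$ and $\{Z,Y\}$ are non-edges. The step I expect to be the main obstacle is triangle-safety in the \emph{whole} graph $G$, not merely inside the induced $C_5\times C_5$, since a naive lift of the remark only rules out common neighbours within that copy. Here the explicit embedding is what saves the argument: a common neighbour of $X$ and $Y$ would, in the first coordinate, be a vertex of $\overline{\Gamma_1}$ adjacent to both $c^1_1$ and $c^1_2$; but $c^1_1\sim c^1_2$ and $\overline{\Gamma_1}$ is triangle-free, so none exists. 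The symmetric computation in the second coordinate shows $Z$ and $Y$ have no common neighbour. Hence adding either edge keeps $G$ triangle-free.

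Finally I would settle $3$-colorability by a pigeonhole argument that lifts to $G$ verbatim. If adding $\{X,Y\}$ destroyed $3$-colorability, then every proper $3$-coloring of $G$ would give $X$ and $Y$ the same color; likewise failure of $\{Z,Y\}$ would force $Z$ and $Y$ to agree in every coloring. Together these would make $X$ and $Z$ share a color in every proper $3$-coloring, contradicting the induced edge $X\sim Z$ (and using that $G$ admits at least one proper $3$-coloring). So at least one of the two edges preserves $3$-colorability, and since both preserve triangle-freeness, that edge can be added to $G$. Removing it from $P$ then leaves the complement triangle-free and $3$-colorable, so $P$ violates condition (3) and is not a minimal prime graph. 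The argument is uniform for all $n\ge 2$: the first two coordinates carry the two cycles and the remaining coordinates ride along with the first.
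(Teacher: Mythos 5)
Your proposal is correct and follows the same high-level strategy as the paper---embed an induced $C_5\times C_5$ in $\bigtimes_{i=1}^n\overline{\Gamma_i}$ using the result of \cite{gruber} that every minimal prime graph contains an induced $C_5$, then exploit the two candidate edges at $(v_1,v_2)$, $(v_2,v_1)$, $(v_2,v_2)$ from \Cref{remark:C_5timesC_5}---but you execute it with more care at exactly the point where the paper is terse. The paper obtains the induced $C_5\times C_5$ by an induction (the $n=2$ case followed by \Cref{lemma:gammasubgraphoftensor}) and then simply cites the remark, which on its face only asserts that the two edges can be added to $C_5\times C_5$ \emph{as a standalone graph}; adding an edge inside an induced subgraph could in principle create a triangle through an outside vertex, or break $3$-colorability of the ambient graph, so a lifting argument is genuinely needed. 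Your synchronized embedding (coordinates $i\ge 3$ riding along with the first) makes this lift clean: a common neighbour of $X$ and $Y$ in the full product would force a triangle on $c^1_1, c^1_2$ in $\overline{\Gamma_1}$ (and symmetrically on $c^2_1, c^2_2$ in $\overline{\Gamma_2}$ for $Z$ and $Y$), and your pigeonhole over proper $3$-colorings of the whole graph $G$---both edges failing would force $X$ and $Z$ to share a colour in every colouring, contradicting the edge $X\sim Z$---is the correct global version of the remark's colouring argument. So your write-up buys rigour where the paper's proof hand-waves, at the modest cost of a more elaborate explicit embedding in place of the paper's induction; both yield the same conclusion.
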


\begin{proof}
By Lemma 4.1 of \cite{gruber}, every minimal prime graph $\Gamma$ contains an induced subgraph $\Gamma^*$ isomorphic to $C_5$. As $C_5$ is self-complementary, the complement $\overline{\Gamma^*}$, an induced subgraph of $\overline{\Gamma}$, will be isomorphic to $C_5$. So for each $\overline{\Gamma_i}$, there exists an induced subgraph, $\overline{\Gamma_i^*}$ with graph isomorphism $\phi_i:\overline{\Gamma_i^*} \rightarrow C_5$. For $n=2$, $\overline{\Gamma_1} \times \overline{\Gamma_2}$ will have induced subgraph $\overline{\Gamma^*}$ defined by the vertices $\{(u, v) \mid u \in \overline{\Gamma_1^*}, v \in \overline{\Gamma_2^*} \}$. We define $\Phi: \overline{\Gamma^*} \rightarrow C_5 \times C_5$ by $\Phi((u,v)) = (\phi_1(u), \phi_2(v))$. It can easily be verified that $\Phi$ is a well-defined graph isomorphism, showing $C_5 \times C_5$ is isomorphic to an induced subgraph of $\overline{\Gamma_1} \times \overline{\Gamma_2}$. 

For $n \geq 2$, assume $\bigtimes_{i=1}^n \overline{\Gamma_i}$ contains an induced subgraph isomorphic to $C_5 \times C_5$. By \Cref{lemma:gammasubgraphoftensor}, it will contain an induced subgraph, $\overline{\gamma}\cong C_5$ and related by a graph isomorphism $\phi_{\gamma}$. Also, any minimal prime graph $\overline{\Gamma_{n+1}}$ will contain an induced subgraph $\overline{\gamma_{n+1}} \cong C_5$ by the graph isomorphism $\phi_{n+1}$. Therefore $\bigtimes_{i=1}^{n+1} \overline{\Gamma_i} = (\bigtimes_{i=1}^n \overline{\Gamma_i}) \times \overline{\Gamma_{n+1}}$ will have induced subgraph $\overline{\Gamma^*}$ defined by the vertex set $V(\overline{\Gamma^*}) = \{(u, v) \mid u \in \overline{\gamma}, v \in \overline{\gamma_{n+1}}\}$. Define a function $\Phi : \overline{\Gamma^*} \rightarrow C_5 \times C_5$ by $\Phi((u,v)) = (\phi_{\gamma}(u), \phi_{n+1}(v))$. The proof that $\Phi$ is a well defined graph isomorphism follows all the same steps as the prior cases, and therefore $\overline{\Gamma^*}$ is isomorphic to $C_5 \times C_5$.

This shows that for all $n \geq 2$, $\bigtimes_{i=1}^n \overline{\Gamma_i}$ contains an induced subgraph isomorphic to $C_5 \times C_5$. As explained in \Cref{remark:C_5timesC_5}, there are numerous edges which can be added that do not create a triangle or violate three colorability, so $\bigtimes_{i=1}^n \overline{\Gamma_i}$ cannot be the complement of a minimal prime graph.
\end{proof}

From this result, we learn that not only is it impossible for the complement direct product of minimal prime graphs to be minimal, but it is impossible for the complement direct product of any solvable prime graphs to be minimal. This immediately provides a proof for \Cref{thm:resnonregular}.

\begin{theorem}
If $n \geq 2$ and $\Gamma_i$ are all solvable prime graphs, then $\displaystyle \thickoverline \bigtimes_{i=1}^n \Gamma_i$ is never a minimal prime graph.
\end{theorem}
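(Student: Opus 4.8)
The plan is to argue directly that the complement of the complement direct product fails the minimality condition (3), rather than to reuse the $C_5\times C_5$ obstruction of \Cref{tensorisnotMPG}: a general solvable prime graph need not contain an induced $C_5$ (for instance $\overline{K_m}$ is edgeless), so that route is genuinely unavailable and a new idea is needed. Writing $G_i=\overline{\Gamma_i}$, each $G_i$ is triangle-free and $3$-colorable, and by \Cref{thm:primegraphinduct} the product $\bigtimes_{i=1}^n G_i$ is as well, so $\thickoverline{\bigtimes}_{i=1}^n\Gamma_i$ is already a solvable prime graph. By the characterization of solvable prime graphs (a graph qualifies iff its complement is triangle-free and $3$-colorable), it therefore suffices to exhibit a \emph{single} non-edge of $\bigtimes_{i=1}^n G_i$ whose addition keeps the graph both triangle-free and $3$-colorable; adding such an edge to the product is exactly deleting the corresponding edge from $\thickoverline{\bigtimes}_{i=1}^n\Gamma_i$, so this witnesses the failure of condition (3).

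For the main case, suppose some factor, say $G_1=\overline{\Gamma_1}$, contains an edge $\{c,d\}$ (equivalently, $\Gamma_1$ is not complete). Fix arbitrary vertices $x_2,\dots,x_n$ and set
\[
P=(c,x_2,\dots,x_n),\qquad Q=(d,x_2,\dots,x_n).
\]
I would verify two things. First, $P$ and $Q$ are non-adjacent (they agree in coordinates $2,\dots,n$, which are therefore not edges) and have no common neighbor: a common neighbor $(u_1,\dots,u_n)$ would force $u_1$ to be adjacent to both $c$ and $d$ in $G_1$, which is impossible because $\{c,d\}$ is an edge of the triangle-free graph $G_1$. Hence adding $\{P,Q\}$ creates no triangle. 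Second, to preserve $3$-colorability I would lift a coloring from the first factor: if $f$ is a proper $3$-coloring of $G_1$, then $F(y_1,\dots,y_n):=f(y_1)$ is a proper $3$-coloring of the entire product (this is precisely the projection bound $\chi(G\times H)\le\min(\chi(G),\chi(H))$ in action), and $F(P)=f(c)\ne f(d)=F(Q)$ since $\{c,d\}\in E(G_1)$. Thus $P$ and $Q$ already receive different colors, so $F$ stays proper after adding $\{P,Q\}$ and the enlarged graph remains $3$-colorable.

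It remains to dispose of the degenerate case in which every $G_i$ is edgeless, i.e.\ every $\Gamma_i$ is complete. Then $\bigtimes_{i=1}^n G_i$ has no edges at all, so adding any single edge leaves it triangle-free and bipartite, hence $3$-colorable; equivalently $\thickoverline{\bigtimes}_{i=1}^n\Gamma_i$ is a complete graph, which is never minimal once it has at least two vertices (and has too few vertices to be a minimal prime graph otherwise). Combining the two cases gives the claim for $n=2$, and the general $n$ follows identically since the argument only used that one coordinate carries an edge, or one can induct exactly as in \Cref{thm:primegraphinduct}.

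I expect the one real subtlety to be the preservation of $3$-colorability, since edge additions generically raise the chromatic number; the projection/lifting trick resolves it cleanly by producing, \emph{in advance}, a $3$-coloring that already separates the two endpoints of the new edge. The triangle-free check is then immediate from triangle-freeness of a single factor, and the only remaining care is to isolate the trivial all-complete case, where no factor supplies the edge $\{c,d\}$ that the construction requires.
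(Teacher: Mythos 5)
Your proof is correct, and it takes a genuinely different and more self-contained route than the paper's. The paper argues by reduction and casework: when every $\Gamma_i$ is connected with at least five vertices, it passes from each $\Gamma_i$ to a spanning minimal prime subgraph $\gamma_i$ (a fact it asserts rather than proves), applies \Cref{tensorisnotMPG} --- whose engine is the induced copy of $C_5\times C_5$ and the edge-addition observation of \Cref{remark:C_5timesC_5} --- and then transfers the addable edge from $\bigtimes_{i=1}^n\overline{\gamma_i}$ down to its spanning subgraph $\bigtimes_{i=1}^n\overline{\Gamma_i}$; the remaining cases (a disconnected factor, or a factor on fewer than five vertices) are handled by showing the product is then bipartite and hence cannot contain the induced $C_5$ that the complement of every minimal prime graph must contain. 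You bypass all of this machinery: a single edge $\{c,d\}$ in one triangle-free factor $G_1=\overline{\Gamma_1}$ produces the non-edge $\{P,Q\}$ with $P=(c,x_2,\dots,x_n)$ and $Q=(d,x_2,\dots,x_n)$; triangle-freeness of $G_1$ alone rules out a common neighbor, and the projection coloring $F(y_1,\dots,y_n)=f(y_1)$ is a proper $3$-coloring of the whole product that already separates $P$ from $Q$, so the edge can be added and minimality fails; the degenerate all-complete case gives a complete graph, which is never minimal. Note that your $\{P,Q\}$ is exactly the pattern of the edge $\{(v_1,v_2),(v_2,v_2)\}$ used in \Cref{remark:C_5timesC_5}, but your argument verifies, globally and in one step, what the paper leaves implicit: that adding such an edge keeps the \emph{entire} product (not just the induced $C_5\times C_5$) triangle-free and $3$-colorable. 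So your route is more elementary (no $C_5$-embedding theorem from \cite{gruber}, no spanning-minimal-subgraph claim, no bipartiteness casework) and actually tightens the paper's weakest steps; what the paper's longer route buys is reusable structural information --- the bipartiteness of complements of disconnected or small solvable prime graphs and the $C_5\times C_5$ obstruction --- which it leverages elsewhere in the section.
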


\begin{proof}

This proof is done by case work looking at the number of vertices and connectivity.

Starting in the case where all $\Gamma_i$ are connected and have five or more vertices, each $\Gamma_i$ will have a subgraph $\gamma_i$ which is a minimal prime graph defined on the same vertices so that $V(\gamma_i) = V(\Gamma_i)$ and $E(\gamma_i) \subseteq E(\Gamma_i)$. As all edges not in $\Gamma_i$ will not be in $\gamma_i$, it is clear $V(\overline{\gamma_i}) = V(\overline{\Gamma_i})$ and $E(\overline{\gamma_i}) \supseteq E(\overline{\Gamma_i}).$ This implies that if $u = (u_1, \dots, u_n) \in E(\bigtimes_{i=1}^{n} \overline{\Gamma_i})$, defined by the direct product so that $u_i \in E(\overline{\Gamma_i})$, then all $ u_i \in E(\overline{\gamma_i})$ and $ u \in E(\bigtimes_{i=1}^n\overline{\gamma_i}) $. Shown in \Cref{tensorisnotMPG}  $\bigtimes_{i=1}^n \overline{\gamma_i}$ is not a minimal prime graph, so there exists an edge $v \notin E(\bigtimes_{i=1}^n \overline{\gamma_i})$ which would not create a triangle or violate the three coloring if added to $\bigtimes_{i=1}^n\overline{\gamma_i}$. As $\bigtimes_{i=1}^n \overline{\Gamma_i}$ is a subgraph, adding the edge $v$ could not create a triangle or violate the three coloring in the subgraph. This shows $v$ is an edge that contradicts minimality, and therefore $\bigtimes_{i=1}^n \overline{\Gamma_i}$ is not the complement to a minimal prime graph.

If any $\Gamma_i$ are not connected, then $\overline{\Gamma_i}$ must be bipartite. This is because every vertex of a component must have an edge to every vertex of the other in the complement, and this also means that there can be no edges between vertices of the same component without creating a triangle. Clearly, there could be at most two components in a solvable prime graph, so each component is a color making $\overline{\Gamma_i}$ bipartite. In Lemma 2 of \cite{bipartitegraphtensor}, it is shown that a direct product of two graphs is bipartite if and only if at least one of the graphs is bipartite. Therefore $\bigtimes_{i=1}^n \overline{\Gamma_i}$ will be bipartite if any $\Gamma_i$ are not connected. $C_5$ is not bipartite, so $C_5$ could not embed in a bipartite graph. As all complements of minimal prime graphs contain an induced subgraph of $C_5$, $\bigtimes_{i=1}^n\overline{\Gamma_i}$ could not be the complement of a minimal prime graph.

All cases where any $\Gamma_i$ has less than five vertices, will cause a bipartite $\overline{\Gamma_i}$, so $\bigtimes_{i=1}^n \overline{\Gamma_i}$ will be bipartite and cannot be the complement of a minimal prime graph.

Any $\overline{\Gamma_i}$ with one or two vertices, is two-colorable because there are not enough vertices to force a higher coloring. Any $\overline{\Gamma_i}$ with three or four vertices must also be bipartite to avoid a triangle. 

This completes all cases and shows that no direct product of solvable prime graphs will be minimal.
\end{proof}

We show next that a similar theorem holds for the Cartesian product. Note that if $G$ and $H$ have $n_1$ and $n_2$ vertices, respectively, then the adjacency matrix of $G\mathop{\Box} H$ is $$ A_{G\Box H}	= (A_G\otimes I_{n_2}) + (I_{n_1}\otimes A_H)$$ for identity matrices $I_{n_1}$ and $I_{n_2}$.

\begin{theorem}
	If $\Gamma_1, \dots, \Gamma_n$ are solvable prime graphs, then $ \thickoverline \msquare_{i=1}^n \Gamma_i$ is a solvable prime graph. 
\end{theorem}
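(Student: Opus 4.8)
The plan is to follow the template of \Cref{thm:primegraphinduct}, exploiting the fact that taking the complement converts the complementary Cartesian product into an ordinary one. Since $G\,\thickoverline{\Box}\,H = \overline{\overline{G}\mathop{\Box}\overline{H}}$, the complement of $\thickoverline\msquare_{i=1}^n \Gamma_i$ is exactly $\mathop{\Box}_{i=1}^n \overline{\Gamma_i}$. As each $\Gamma_i$ is a solvable prime graph, each $\overline{\Gamma_i}$ is triangle-free and $3$-colorable, so it suffices to prove that the ordinary Cartesian product of triangle-free, $3$-colorable graphs is again triangle-free and $3$-colorable. I would first settle the case $n=2$ and then promote it to arbitrary $n$ by induction using associativity of the Cartesian product.

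For triangle-freeness when $n=2$, I would mimic the trace computation of \Cref{thm:primegraphinduct}, now using the identity $A_{G\Box H} = (A_G\otimes I_{n_2}) + (I_{n_1}\otimes A_H)$ recorded just above the statement. Writing $P = A_G\otimes I_{n_2}$ and $Q = I_{n_1}\otimes A_H$, the two summands commute (each product is $A_G\otimes A_H$), so $(P+Q)^3 = P^3 + 3P^2Q + 3PQ^2 + Q^3$. Taking traces and using $\tr(X\otimes Y)=\tr(X)\tr(Y)$ gives
\[
\tr\!\bigl((P+Q)^3\bigr) = n_2\,\tr(A_G^3) + 3\,\tr(A_G^2)\tr(A_H) + 3\,\tr(A_G)\tr(A_H^2) + n_1\,\tr(A_H^3).
\]
The outer terms vanish because $G$ and $H$ are triangle-free (so $\tr(A_G^3)=\tr(A_H^3)=0$), and the middle terms vanish because simple loopless graphs have zero diagonal (so $\tr(A_G)=\tr(A_H)=0$). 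Hence the triangle count $\tr(A_{G\Box H}^3)/6$ is zero and $G\mathop{\Box}H$ is triangle-free. Alternatively one can argue combinatorially: every edge of a Cartesian product alters exactly one coordinate, so any triangle must lie inside a single $G$- or $H$-layer, forcing a triangle in a factor.

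For $3$-colorability I would invoke the stated identity $\chi(G\mathop{\Box}H)=\max\{\chi(G),\chi(H)\}$; since $\chi(\overline{\Gamma_i})\le 3$ for each $i$, a short induction over the factors yields $\chi\!\left(\mathop{\Box}_{i=1}^n \overline{\Gamma_i}\right) = \max_i \chi(\overline{\Gamma_i}) \le 3$. Combining the two parts, $\mathop{\Box}_{i=1}^n \overline{\Gamma_i}$ is triangle-free and $3$-colorable, i.e.\ $\thickoverline\msquare_{i=1}^n \Gamma_i$ is a solvable prime graph.

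For the inductive step from $n$ to $n+1$, I would write $\mathop{\Box}_{i=1}^{n+1}\overline{\Gamma_i} = \bigl(\mathop{\Box}_{i=1}^{n}\overline{\Gamma_i}\bigr)\mathop{\Box}\,\overline{\Gamma_{n+1}}$ and apply the $n=2$ case, with the inductive hypothesis guaranteeing the first factor is triangle-free and $3$-colorable. I do not expect a serious obstacle: the only delicate points are the commutativity of $P$ and $Q$ (which legitimizes the binomial-style expansion of the cube) and the observation that the trace controls the full triangle count rather than merely triangle-freeness of the individual factors.
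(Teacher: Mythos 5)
Your proposal is correct and matches the paper's own proof essentially step for step: the same trace computation with $P = A_G\otimes I_{n_2}$ and $Q = I_{n_1}\otimes A_H$ (the paper uses the commutativity of $P$ and $Q$ implicitly, which you rightly make explicit), the same appeal to $\chi(G\mathop{\Box}H)=\max\{\chi(G),\chi(H)\}$, and the same induction on the number of factors. The combinatorial aside about triangles lying in a single layer is a nice, even simpler, alternative, but it does not change the fact that your main argument is the paper's argument.
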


\begin{proof}
	We proceed by induction. As $n=1$ is immediate, consider the base case of $n=2$. Let $\Gamma_1$ and $\Gamma_2$ be solvable prime graphs on $n_1$ and $n_2$ vertices, respectively. Let $A_1$, $A_2$, and $A_{1\Box 2}$ be the adjacency matrices of $\overline{\Gamma_1}$, $\overline{\Gamma_2}$, and $\overline{\Gamma_1} \Box \overline{\Gamma_2}$,  respectively. As $\overline{\Gamma_1}$ and $\overline{\Gamma_2}$ are triangle-free, we know $\tr(A_1^3) = \tr(A_2^3)=0$. 
\[
\begin{split}
	\tr(A_{1\Box2}^3) & = \tr\big[(A_1\otimes I_{n_2} + I_{n_1}\otimes A_2)^3\big] \\
	& = \tr\big[A_1^3\otimes I_{n_2} +  3(A_1^2\otimes A_2) + 3(A_1\otimes A_2^2) + I_{n_1}\otimes A_2^3\big] \\
	& = \tr(A_1^3)\tr(I_{n_2})+ 3\tr(A_1^2)\tr(A_2) + 3\tr(A_1)\tr(A_2^2) + \tr(I_{n_1})\tr(A_2^3) \\
	& = 0 \\
\end{split}
\]
The trace of $A_1$ and $A_2$ are 0 because all of our graphs are assumed to not have loops. Therefore, the Cartesian product $\overline{\Gamma_1} \Box \overline{\Gamma_2}$ is triangle-free. We can conclude that $\overline{\Gamma_1} \Box \overline{\Gamma_2}$ is 3-colorable by $\chi(\overline{\Gamma_1}\mathop{\Box}\overline{\Gamma_2}) = \max\{\chi(\overline{\Gamma_1}), \chi(\overline{\Gamma_2})\}\leq 3.$ It follows then that $\overline{\overline{\Gamma_1}\Box \overline{\Gamma_2}}$ is a solvable prime graph.

Assume that $\overline{\mathop{\msquare}_{i=1}^k \overline{\Gamma_i}}$ is a solvable prime graph for $k\geq 2$. Let $G = \overline{\mathop{\msquare}_{i=1}^k \overline{\Gamma_i}}$ and and so $G$ is triangle-free and 3-colorable. Consider an arbitrary prime graph $\Gamma_{k+1}$ of a finite solvable group. By the base case, we see that $\overline{G}\Box \overline{\Gamma}_{k+1}$ is triangle-free and 3-colorable. The result follows immediately. By induction, we have that this holds for all $n\in \N$. 
	\end{proof}

The strong product does not have an analogous result to the direct and Cartesian products. For example, consider $C_5\boxtimes C_5$. This gives $\chi(C_5\boxtimes C_5) = 5$ and $\chi(\overline{C_5\boxtimes C_5}) = 8$. Both the product and the complement of the product are also not triangle-free. However, we can proceed in a slightly different direction to find an interesting result. 

Since $\chi(G\boxtimes H) \leq \chi(G)\chi(H)$, if $\chi(H)=1$ and $\chi(G)\leq 3$, then $\chi(G\boxtimes H)\leq 3$. Before we prove a result related to the strong product, we illustrate the idea behind the proof. Consider the graphs below. 

\begin{figure*}[ht!]
\begin{minipage}{.48\textwidth}
\centering
\caption{$C_5$-reseminant graph $G$.}
\label{reggraph}
\begin{tikzpicture}[scale=0.55,
dot/.style = {circle,draw, inner sep=0, minimum size=1.5em},
dot/.default = 6pt  
                    ] 
\begin{scope}[every node/.style={dot}]
     \node (5) at (-.7,.7) {4};
    \node (10) at (-2.7,1) {9};
    
    \node (1) at (0.1,-1.4) {0};
    \node (6) at (-1.1,-3) {5};
    
    \node (2) at (2.7,-1.4) {1};
 	\node (7) at (3.9,-3) {6};

	\node (3) at (3.5,.7) {2};
	\node (8) at (5.5,1) {7};
	
    \node (4) at (1.4,2.3) {3};
    \node (9) at (1.4,4.2) {8};  \end{scope}

\begin{scope}[
              every edge/.style={draw=black,very thick}]
    \path [-] (1) edge node {} (2);
     \path [-] (2) edge node {} (3);
     \path [-] (3) edge node {} (4);
     \path [-] (4) edge node {} (5);
     \path [-] (5) edge node {} (1);
     \path [-] (6) edge node {} (1);
     \path [-] (6) edge node {} (2);
     \path [-] (6) edge node {} (5);
     \path [-] (7) edge node {} (2);
     \path [-] (7) edge node {} (3);
     \path [-] (7) edge node {} (1);
     \path [-] (8) edge node {} (3);
     \path [-] (8) edge node {} (2);
     \path [-] (8) edge node {} (4);
     \path [-] (9) edge node {} (4);
     \path [-] (9) edge node {} (3);
     \path [-] (9) edge node {} (5);
     \path [-] (10) edge node {} (5);
     \path [-] (10) edge node {} (4);
     \path [-] (10) edge node {} (1);
     \path [-] (6) edge node {} (7);
     \path [-] (8) edge node {} (7);
     \path [-] (9) edge node {} (8);
     \path [-] (10) edge node {} (9);
     \path [-] (10) edge node {} (6);
\end{scope}
\end{tikzpicture}
\end{minipage}
\begin{minipage}{.48\textwidth}
\centering
\caption{$K_2\boxtimes C_5$.}
\begin{tikzpicture}[scale=0.55,
dot/.style = {circle,draw, inner sep=0, minimum size=1.5em},
dot/.default = 6pt  
                    ] 
\begin{scope}[every node/.style={dot}]
     \node (5) at (-.7,.7) {(0,4)};
    \node (10) at (-2.7,1) {(1,4)};
    
    \node (1) at (0.1,-1.4) {(0,0)};
    \node (6) at (-1.1,-3) {(1,0)};
    
    \node (2) at (2.7,-1.4) {(0,1)};
 	\node (7) at (3.9,-3) {(1,1)};

	\node (3) at (3.5,.7) {(0,2)};
	\node (8) at (5.5,1) {(1,2)};
	
    \node (4) at (1.4,2.3) {(0,3)};
    \node (9) at (1.4,4.2) {(1,3)};  \end{scope}

\begin{scope}[
              every edge/.style={draw=black,very thick}]
     \path [-] (1) edge[style={draw=black}] node {} (2);
     \path [-] (2) edge[style={draw=black}] node {} (3);
     \path [-] (3) edge[style={draw=black}] node {} (4);
     \path [-] (4) edge[style={draw=black}] node {} (5);
     \path [-] (5) edge[style={draw=black}] node {} (1);
     \path [-] (6) edge[style={draw=black}] node {} (1);
   
     \path [-] (2) edge[style={draw=black}] node {} (7);
     \path [-] (3) edge[style={draw=black}] node {} (8);
	 \path [-] (4) edge[style={draw=black}] node {} (9);
     \path [-] (5) edge[style={draw=black}] node {} (10);
      
     \path [-] (6) edge[style={draw=black}] node {} (7);
     \path [-] (8) edge[style={draw=black}] node {} (7);
     \path [-] (9) edge[style={draw=black}] node {} (8);
     \path [-] (10) edge[style={draw=black}] node {} (9);
     \path [-] (10) edge[style={draw=black}] node {} (6);

     \path [-] (6) edge[style={draw=nred}] node {} (2);
     \path [-] (6) edge[style={draw=nred}] node {} (5);
     
     \path [-] (7) edge[style={draw=nred}] node {} (3);
     \path [-] (7) edge[style={draw=nred}] node {} (1);
     
     \path [-] (8) edge[style={draw=nred}] node {} (2);
     \path [-] (8) edge[style={draw=nred}] node {} (4);
     
     \path [-] (9) edge[style={draw=nred}] node {} (3);
     \path [-] (9) edge[style={draw=nred}] node {} (5);
     
     \path [-] (10) edge[style={draw=nred}] node {} (4);
     \path [-] (10) edge[style={draw=nred}] node {} (1);

\end{scope}
\end{tikzpicture}
\end{minipage}
\end{figure*}

By observation, we can see that the graph $G$ generated from $C_5$ by duplicating each of the vertices in $C_5$ once is isomorphic to $K_2\boxtimes C_5$. This follows from the fact that $K_2\boxtimes C_5$ has vertex set $V(K_2)\times V(C_5)$ and edge set $E(K_2\times C_5)\cup E(K_2\mathop{\Box} C_5)$. This leads to the following theorem.

\begin{theorem}
Let $G$ be a minimal prime graph. Then, $K_{m+1}\boxtimes G$ is a minimal prime graph. Furthermore, the graph $K_{m+1}\boxtimes G$ is isomorphic to the graph generated from duplicating each vertex of $G$ exactly $m$ times.
\end{theorem}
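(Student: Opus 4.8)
The plan is to treat the concluding ``furthermore'' claim as the core of the argument: once we know that $K_{m+1}\boxtimes G$ is isomorphic to the graph obtained by duplicating every vertex of $G$ exactly $m$ times, minimality comes for free. By \Cref{def:Vertex Duplication}, a single vertex duplication applied to a minimal prime graph again yields a minimal prime graph, so duplicating each vertex of $G$ exactly $m$ times is just a sequence of $nm$ such operations (where $n=|V(G)|$), and an induction on the number of duplications shows the end result is a minimal prime graph. Hence I would prove the isomorphism first and deduce the minimality statement as an immediate consequence.

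First I would pin down the adjacency relation in $K_{m+1}\boxtimes G$. Writing $V(K_{m+1})=\{0,1,\dots,m\}$ and unwinding $E(K_{m+1}\boxtimes G)=E(K_{m+1}\times G)\cup E(K_{m+1}\mathop{\Box} G)$, the completeness of $K_{m+1}$ (every pair of distinct first coordinates is adjacent) collapses the edge-types into a single clean criterion: $(a,u)$ and $(b,v)$ are adjacent if and only if either $\{u,v\}\in E(G)$, or $u=v$ and $a\neq b$. This is the key computation; it is the main point requiring care, since one must combine the direct-product edges (with $a\neq b$ and $\{u,v\}\in E(G)$) with the two Cartesian-product edge types and verify that their union is exactly the stated relation.

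Next I would describe the duplication side. Let $G$ have vertices $v_1,\dots,v_n$ and form $G'$ by duplicating each $v_i$ exactly $m$ times. As already used in the proof of \Cref{thm:resnonregular}, the outcome is well-defined up to isomorphism because only the multiplicities of the duplications matter, not their order (Lemmas 7.3--7.5 of \cite{florez}); this partitions $V(G')$ into true-twin classes $V_1,\dots,V_n$ with $|V_i|=m+1$. By \Cref{lemma:completegraph} each induced subgraph $G'[V_i]$ is a clique $K_{m+1}$, while for $i\neq j$ every vertex of $V_i$ is adjacent to every vertex of $V_j$ precisely when $\{v_i,v_j\}\in E(G)$, since duplication preserves the adjacency relations of the original vertex. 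I would then define $\Phi\colon V(K_{m+1}\boxtimes G)\to V(G')$ by sending $(a,v_i)$ to the $a$-th member of $V_i$; this is a bijection as both vertex sets have size $n(m+1)$, and comparing the adjacency criterion above with the intra-class/inter-class description of $G'$ shows at once that $\Phi$ preserves and reflects edges. Thus $K_{m+1}\boxtimes G\cong G'$.

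Finally, combining the two pieces: $G'$ is obtained from the minimal prime graph $G$ by a finite sequence of vertex duplications, each preserving the minimal-prime-graph property, so $G'$—and hence its isomorphic copy $K_{m+1}\boxtimes G$—is a minimal prime graph. No separate colorability or triangle-freeness estimate is needed, since those properties are carried along automatically by the duplication construction; the only genuine care lies in the strong-product adjacency bookkeeping and in invoking the order-independence of duplication to make $G'$ well-defined.
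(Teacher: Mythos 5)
Your proposal is correct and follows essentially the same route as the paper: both establish an explicit bijection between $V(K_{m+1}\boxtimes G)$ and the vertex set of the $m$-fold duplicated graph $G'$, verify it is an isomorphism, and then let minimality follow from the fact that vertex duplication preserves the minimal prime graph property. The only cosmetic difference is that you collapse the strong-product adjacency into a single criterion before comparing, whereas the paper verifies the homomorphism by casework on the three edge types; your version is, if anything, slightly cleaner since you also note explicitly that the map reflects edges.
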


\begin{proof}
Let $G$ be a minimal prime graph on $n$ vertices. We label $G$ from $0$ to $n-1$. Let $G'$ be the graph generated from duplicating each vertex of $G$ exactly $m$ times. We construct a map $\psi:K_{m+1}\boxtimes G\to G'$ by sending the vertex $(i,j)$ for $i\in\{0,1, \dots, m\}$ and $j\in\{0,1, \dots, n-1\}$ to $v$ where $v$ is the $i$-th vertex duplication of $j$. For $i=0$, we send $(0,j)$ to the vertex $j\in\{0,1, \dots, n-1\}$ in the starting graph $G$. Each vertex $(i,j)\in V(K_{m+1}\boxtimes G)$ has exactly one image, as each vertex in $G$ was duplicated exactly $m$ times. So, this is a well-defined function. 

We show now that $\psi$ is bijective and a graph homomorphism. Suppose $\psi(i_1,j_1) = \psi(i_2, j_2) = v$, then $v$ is both the $i_1$-th duplicated vertex of $j_1$ and the $i_2$-th duplicated vertex of $j_2$. But, as each vertex in $V(G)$ has $m$ unique duplicated vertices, then we must have that $j_1=j_2$. As $v$ is both the $i_1$-th and $i_2$-th duplicated vertex of $j_1$, then we also have $i_1 = i_2$. From this, it follows that $(i_1, j_1) = (i_2, j_2)$. 

Suppose $v\in V(G')$. Then, either $v$ was a vertex in the starting graph $G$ or $v$ is a duplicate of a vertex $j$ in $G$ for $j\in\{0,1, \dots, n-1\}
    $. In the first case, we have that $\psi(0,v) = v$. In the second, $v$ must be the $i$-th duplicate of $j$ for some $i\in \{0,1, \dots, m\}$ and so $\psi(i,j) = v$. We have then that $\psi$ is bijective. 

Next, we show that $\psi$ is a graph homomorphism. Suppose $\{(i_1, j_1), (i_2, j_2)\}$ is an edge in $K_{m+1}\boxtimes G$. We need to show that $\{\psi(i_1, j_1), \psi(i_2, j_2)\} = \{v_1, v_2\}$ is an edge in $G'$. We need to consider this in cases due to the nature of the strong product. 

\emph{Case 1:} $i_1 = i_2$

Since $i_1=i_2$ and $\{(i_1, j_1), (i_2, j_2)\}$ is an edge in $K_{m+1}\boxtimes G$, we know that $\{j_1, j_2\}$ is an edge in $G$. As $\{j_1, j_2\}$ is an edge in $G$, there is an edge between every duplicated vertex of $j_1$ and of $j_2$. The images of $\psi(i_1, j_1)$ and $\psi(i_2, j_2)$ are duplicated vertices of $j_1$ and $j_2$. From this, we know that $\{\psi(i_1, j_1), \psi(i_2, j_2)\} = \{v_1, v_2\}$ is an edge in $G'$.

\medskip

\emph{Case 2:} $\{i_1,i_2\}\in E(K_{m+1})$ and $\{j_1, j_2\}\in E(G)$

As $\{j_1, j_2\}$ is an edge in $G$, every pair of duplicated vertices of $j_1$ and of $j_2$ are connected with an edge. This follows from the fact that duplicated vertices share the same closed neighborhood. But, $\psi(i_1, j_1)=v_1$ and $\psi(i_2, j_2)=v_2$ are duplicated vertices of $j_1$ and $j_2$ and so $\{v_1, v_2\}$ is an edge in $G'$.

\medskip

\emph{Case 3:} $j_1 = j_2$

Since $j_1 = j_2$, the images $\psi(i_1, j_1)=v_1$ and $\psi(i_2, j_2)=v_2$ are duplicated vertices of $j_1$. Every two duplicated vertices of $j_1$ are connected with an edge and so $\{v_1, v_2\}$ is an edge in $G'$.

From the previous 3 cases, we get that $\psi$ is a graph homomorphism. The map $\psi$ is a bijective graph homomorphism and so $G'\cong K_{m+1}\boxtimes G$. Since $G'$ is a minimal prime graph, the graph $K_{m+1}\boxtimes G$ is a minimal prime graph as well. 
\end{proof}

To summarize the results of this section:

\begin{itemize}
    \item If $G$, $H$ are solvable prime graphs, then $G\,\thickoverline \msquare H$ and $G\,\thickoverline \times H$ are solvable prime graphs.
    \item If $G$, $H$ are solvable prime graphs, then $G\,\thickoverline \times H$ is not a minimal prime graph.
    \item If $G$ is a minimal prime graph, then $K_n \boxtimes G$ is a minimal prime graph isomorphic to the graph generated from duplicating each vertex $n-1$ times. 
\end{itemize}


\section{Clique Generation for Minimal Prime Graphs}

In the prior pages, we have focused exclusively on results connected to vertex duplication, building on previous work. However, not all minimal prime graphs can be built through vertex duplication.
Previously in \cite{florez} the idea of \emph{generating} a prime graph was developed, where a minimal prime graph is generated from another by adding a vertex along with edges. Expanding the minimal prime graph generation methods in \cite{florez}, we introduce a novel approach, separate from vertex duplication, that generates new minimal prime graphs from existing ones. 

\begin{definition}[Minimal Prime Graph Generation]
    \label{def:MinimalPrimeGraphGeneration}
    Suppose $\Gamma$ is a minimal prime graph. We say that any minimal prime graph $\Gamma^{\prime}$ is \emph{generated} from $\Gamma$ if there exists a vertex $w\in \Gamma^{\prime}$ such that $\Gamma\cong \Gamma^{\prime}\cut\lbrace w\rbrace$.
\end{definition}
We are interested in the problem of classifying the minimal prime graphs $\Gamma^{\prime}$ which can be generated from a given minimal prime graph $\Gamma$, as well as classifying common structures used in these generated graphs (i.e. vertex duplication). Vertex duplication serves as a simple example of generating minimal prime graphs. However, rephrasing this method of generating a minimal prime graph from another to be relative to the set of vertices the new vertex is adjacent to, rather than relative to a specific vertex, will lend itself better to a generalization of this phenomena. We require this generalization as it is easily seen there exists generated minimal prime graphs which is not done through this process of vertex duplication. See \Cref{fig:cgnotvd} for an example. As such, it means this vertex duplication generation method is not general enough. Shifting our perspective away from the idea of duplicating a vertex, in general we are interested in these subsets of vertices $U$ of a minimal prime graph $\Gamma$ for which the graph
\begin{equation*}
    \Gamma^{\prime}=(V(\Gamma)\cup\lbrace w \rbrace, E(\Gamma)\cup\lbrace \lbrace w, u\rbrace\mid u\in U\rbrace)
\end{equation*}
is a minimal prime graph generated from $\Gamma$. We will call such subsets \emph{generation sites}.
\begin{definition}[Generation Site]
    \label{def:Generation Site}
    Suppose $\Gamma$ is a minimal prime graph. We say that any subset of vertices $U$ is a \emph{generation site} provided that the graph
    \begin{equation*}
        \Gamma^{\prime}=(V(\Gamma)\cup\lbrace w\rbrace, E(\Gamma)\cup\lbrace \lbrace  w, u\rbrace\mid u\in U\rbrace)
    \end{equation*}
    is a minimal prime graph generated from $\Gamma$. We call the graph $\Gamma^{\prime}$ the \emph{graph generated from $\Gamma$ at $U$}.
\end{definition}
We will use the terminology of a \emph{generation method} to indicate some criterion or feature of a graph which can be used to create a generation site. For example, we say vertex duplication is a generation method as it provides a method of finding a generation site, namely take all vertices of distance $1$ or less from a fixed vertex.

In our aim to study these generation sites, we can immediately show some basic properties of their complements.
\begin{lemma}[Complement of Generation Site is Colored by 2 Colors]
    \label{lem:GenerationSiteisColoredby2Colors}
    Suppose $\Gamma$ is a minimal prime graph and $U\subseteq V(\Gamma)$ is a generation site. Then there exists a $3$ coloring of $\overline{\Gamma}$ such that the set $K=V(\Gamma)\cut U$ is colored by at most 2 colors in the coloring.
\end{lemma}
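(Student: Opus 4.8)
The plan is to read the required coloring directly off the graph generated at $U$. By \Cref{def:Generation Site}, the hypothesis that $U$ is a generation site means that
\[
\Gamma' = \bigl(V(\Gamma)\cup\{w\},\ E(\Gamma)\cup\{\{w,u\}\mid u\in U\}\bigr)
\]
is a minimal prime graph generated from $\Gamma$. In particular $\overline{\Gamma'}$ is triangle-free and $3$-colorable. The first step is simply to fix a proper $3$-coloring $c$ of $\overline{\Gamma'}$ with colors $\{0,1,2\}$, which exists by property (2) in the definition of a minimal prime graph.

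The key observation concerns the neighborhood of $w$ after complementing. In $\Gamma'$ the vertex $w$ is adjacent to exactly the vertices of $U$, so in $\overline{\Gamma'}$ the vertex $w$ is adjacent to exactly the vertices of $K = V(\Gamma)\cut U$. Since $c$ is a proper coloring of $\overline{\Gamma'}$, no neighbor of $w$ can carry the color $c(w)$; hence every vertex of $K$ receives one of the two colors in $\{0,1,2\}\cut\{c(w)\}$. In other words, $K$ is colored by at most two colors under $c$.

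It then remains to transfer this coloring back to $\overline{\Gamma}$. Because $\Gamma$ is precisely the induced subgraph of $\Gamma'$ on the vertex set $V(\Gamma)$, the graph $\overline{\Gamma}$ is the induced subgraph of $\overline{\Gamma'}$ on $V(\Gamma)$ (complementation commutes with passing to induced subgraphs). A proper coloring restricts to a proper coloring of any induced subgraph, so the restriction $c|_{V(\Gamma)}$ is a proper $3$-coloring of $\overline{\Gamma}$, and in it the set $K\subseteq V(\Gamma)$ still uses at most two colors. This is exactly the asserted coloring.

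There is no serious obstacle here: the argument is a direct unwinding of the definition of a generation site together with the fact that adjacency to $w$ flips under complementation. The only points that require care are the bookkeeping that $w$'s neighborhood in $\overline{\Gamma'}$ is exactly $K$, and the observation that a proper coloring of $\overline{\Gamma'}$ restricts to a proper coloring of its induced subgraph $\overline{\Gamma}$; both are routine, so I would state them explicitly but not belabor them.
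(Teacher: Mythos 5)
Your proof is correct and follows essentially the same route as the paper's: fix a $3$-coloring of $\overline{\Gamma'}$, note that every vertex of $K$ is adjacent to $w$ in $\overline{\Gamma'}$ and hence avoids $w$'s color, then restrict the coloring to $\overline{\Gamma}$. Your write-up is slightly more explicit about why the restriction to the induced subgraph remains proper, but the argument is identical in substance.
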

\begin{proof}
    Let $\Gamma^{\prime}$ be the graph generated from $\Gamma$ at $U$, and let $w$ be the extra vertex in $\Gamma^{\prime}$. We see that as for all $v\in K$, $v$ is adjacent to $w$ in $\overline{\Gamma^{\prime}}$, in any coloring of $\overline{\Gamma'}$, no vertex in $K$ can be the same color as $w$. Therefore, in any $3$-coloring of $\overline{\Gamma^{\prime}}$, any $v\in K$ is one of the two colors which $w$ is not. As $\overline{\Gamma^{\prime}}$ has a $3$-coloring as $\Gamma^{\prime}$ is a minimal prime graph, by then restricting this $3$-coloring to $\overline{\Gamma}$, we find our desired $3$ coloring of $\overline{\Gamma}$.
\end{proof}
\begin{lemma}[Complement of Generation Site is a Clique]
    \label{lem:ComplementofGenerationSiteisaClique}
    Suppose $\Gamma$ is a minimal prime graph and $U\subseteq V(\Gamma)$ is a generation site. Then $K=V(\Gamma)\cut U$ is a clique in $\Gamma$.
\end{lemma}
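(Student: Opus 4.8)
The plan is to pass to the complement and exploit the triangle-freeness that minimality guarantees. Let $\Gamma^{\prime}$ be the graph generated from $\Gamma$ at $U$, with new vertex $w$, and recall from \Cref{def:Generation Site} that $\Gamma^{\prime}$ is itself a minimal prime graph; in particular $\overline{\Gamma^{\prime}}$ is triangle-free. The first step is to record how $w$ sits inside $\overline{\Gamma^{\prime}}$: in $\Gamma^{\prime}$ the vertex $w$ is adjacent exactly to $U$, so in $\overline{\Gamma^{\prime}}$ it is adjacent exactly to the complementary set $K = V(\Gamma)\cut U$. Thus $K$ is precisely the (open) neighborhood of $w$ in $\overline{\Gamma^{\prime}}$.

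The key step is then immediate from triangle-freeness. Suppose, for contradiction, that two distinct vertices $u, v \in K$ were adjacent in $\overline{\Gamma}$. Since $\overline{\Gamma}$ is the induced subgraph of $\overline{\Gamma^{\prime}}$ on $V(\Gamma)$, the edge $\{u,v\}$ would also be present in $\overline{\Gamma^{\prime}}$. But $w$ is adjacent to both $u$ and $v$ in $\overline{\Gamma^{\prime}}$, so $\{w, u, v\}$ would form a triangle in $\overline{\Gamma^{\prime}}$, contradicting that $\overline{\Gamma^{\prime}}$ is triangle-free. Hence no two vertices of $K$ are adjacent in $\overline{\Gamma}$; that is, $K$ is an independent set in $\overline{\Gamma}$, which is exactly the statement that $K$ is a clique in $\Gamma$.

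I do not anticipate a genuine obstacle here: once one observes that $K$ is the neighborhood of $w$ in $\overline{\Gamma^{\prime}}$, the conclusion is forced by triangle-freeness alone, and neither the $3$-colorability nor the minimality-of-edges conditions are invoked. The only point requiring (minor) care is to confirm that adjacency in $\overline{\Gamma}$ agrees with adjacency in $\overline{\Gamma^{\prime}}$ for vertices of $V(\Gamma)$, i.e. that complementation commutes with the relevant induced-subgraph restriction; this holds because $\Gamma$ is obtained from $\Gamma^{\prime}$ by deleting $w$, so $\overline{\Gamma}$ is the subgraph of $\overline{\Gamma^{\prime}}$ induced on $V(\Gamma)$.
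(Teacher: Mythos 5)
Your proof is correct and follows essentially the same argument as the paper: both identify $K$ as exactly the neighborhood of $w$ in $\overline{\Gamma^{\prime}}$ and then invoke triangle-freeness of $\overline{\Gamma^{\prime}}$ to force every pair of vertices in $K$ to be adjacent in $\Gamma$. The only cosmetic difference is that you argue by contradiction while the paper concludes directly, and your closing remark about adjacency being preserved under deleting $w$ is the same implicit step the paper makes when passing from $\lbrace u,v\rbrace \notin E(\overline{\Gamma^{\prime}})$ to $\lbrace u,v\rbrace \in E(\Gamma)$.
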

\begin{proof}
    Let $\Gamma^{\prime}$ be the graph generated from $\Gamma$ at $U$, and let $w$ be the extra vertex in $\Gamma^{\prime}$. We now show that $K$ is a clique in $\Gamma^{\prime}$. As $w\notin K$, this is sufficient to show $K$ is a clique in $\Gamma$. If $\abs{V(K)}=1$, then $K$ is immediately a clique. Now suppose $\abs{V(K)}\ge 2$. Fix arbitrary distinct $u, v\in K$. By definition of $K$, $u$ and $v$ are not adjacent to $w$. We conclude $\lbrace w, u\rbrace$ and $\lbrace w, v\rbrace$ are edges in $\overline{\Gamma^{\prime}}$. As $\overline{\Gamma^{\prime}}$ is triangle-free since $\Gamma^{\prime}$ is a minimal prime graph, we conclude $\lbrace u, v\rbrace\not\in E(\overline{\Gamma^{\prime}})$ and thus $\lbrace u, v\rbrace\in E(\Gamma)$. We conclude that as $u$ and $v$ were arbitrary that $K$ is a clique.
\end{proof}
While one might hope the above fully characterizes generation sites, this is not the case. However, if we add the additional condition that the complement $K$ of a generation site is a maximal clique, we see we obtain a partial converse.

\begin{theorem}[Clique Generation]
    \label{thm:CliqueGeneration}
    Suppose the minimal prime graph $\Gamma=(V, E)$ contains a subset of vertices $K$ such that
    \begin{enumerate}[label=(\roman*)]
        \item $K$ is a maximal clique in $\Gamma$,
        \item There exists a coloring of $\overline{\Gamma}$ such that $K$ is colored by two colors.
    \end{enumerate}
    Then $V(\Gamma)\cut K$ is a generation site. We call the corresponding generation method \emph{clique generation}.
\end{theorem}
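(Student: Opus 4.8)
The plan is to verify directly that the graph $\Gamma'$ obtained by adjoining a new vertex $w$ adjacent to exactly $U = V\setminus K$ satisfies the three defining conditions of a minimal prime graph; since $\Gamma'\setminus\{w\}=\Gamma$ by construction, this immediately exhibits $U$ as a generation site. Throughout I would work in the complement, where $w$ is adjacent in $\overline{\Gamma'}$ to precisely the vertices of $K$ and to none of $U$, and where the subgraph of $\overline{\Gamma'}$ induced on $V$ is exactly $\overline{\Gamma}$.

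First I would dispatch the two existence conditions, which use hypotheses (i) and (ii) respectively. For triangle-freeness: any triangle of $\overline{\Gamma'}$ avoiding $w$ lies in $\overline{\Gamma}$ and so cannot exist; any triangle through $w$ would require two neighbors of $w$ that are adjacent to each other, i.e.\ two vertices of $K$ joined by an edge of $\overline{\Gamma'}$, but $K$ is a clique in $\Gamma$ by (i) and hence independent in $\overline{\Gamma}$, ruling this out. For $3$-colorability: take the coloring from hypothesis (ii) in which $K$ receives only two colors and assign $w$ the unused third color; since $w$ is adjacent only to $K$ in $\overline{\Gamma'}$, this is a proper $3$-coloring. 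I would also note that $\Gamma$ contains an induced $C_5$ (Lemma 4.1 of \cite{gruber}) and is therefore not complete, so $K\neq V$, $U\neq\emptyset$, and $w$ attaches to the connected graph $\Gamma$, keeping $\Gamma'$ connected with at least two vertices.

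The substance of the argument is the minimality condition, which I would split according to the two types of edge of $\Gamma'$. For an edge $\{a,b\}$ lying inside $\Gamma$, minimality of $\Gamma$ says that adjoining $\{a,b\}$ to $\overline{\Gamma}$ produces either a triangle or a graph with no $3$-coloring; since $\overline{\Gamma}$ is the subgraph of $\overline{\Gamma'}$ induced on $V$, such a triangle persists in $\overline{\Gamma'}+\{a,b\}$, and any $3$-coloring of $\overline{\Gamma'}+\{a,b\}$ would restrict to one of $\overline{\Gamma}+\{a,b\}$; hence the same obstruction is inherited by $\Gamma'$. For an edge $\{w,u\}$ with $u\in U$, I would invoke maximality of $K$: because $K$ is a maximal clique and $u\notin K$, there is some $x\in K$ not adjacent to $u$ in $\Gamma$, i.e.\ $\{u,x\}\in E(\overline{\Gamma})$. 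Then in $\overline{\Gamma'}+\{w,u\}$ the three edges $\{w,x\}$ (as $x\in K$), $\{w,u\}$ (just added), and $\{u,x\}$ form a triangle, so removing $\{w,u\}$ from $\Gamma'$ destroys triangle-freeness of the complement.

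The main obstacle is exactly this last step for the edges incident to $w$: it is here that maximality of $K$, rather than mere cliqueness, is indispensable, since maximality is precisely the statement that every vertex outside $K$ has a non-neighbor in $K$, which is what forces a triangle to appear upon adding the edge. The inside-$\Gamma$ edges, by contrast, are handled cleanly by the induced-subgraph observation and require no new idea beyond the minimality of $\Gamma$ itself.
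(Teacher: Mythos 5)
Your proof is correct and follows essentially the same route as the paper's: triangle-freeness of $\overline{\Gamma'}$ from $K$ being a clique, $3$-colorability by giving $w$ the unused third color from hypothesis (ii), and minimality split into edges inside $\Gamma$ (where the obstruction is inherited from minimality of $\Gamma$) and edges $\{w,u\}$ (where maximality of $K$ produces a non-neighbor $x\in K$ of $u$ and hence a triangle). If anything, your write-up is slightly more careful than the paper's at two points it glosses over: you justify that $V(\Gamma)\setminus K\neq\emptyset$ (via the induced $C_5$), so $w$ really has a neighbor and $\Gamma'$ is connected, and you spell out why the triangle/coloring obstruction for an edge of $\Gamma$ persists when passing from $\overline{\Gamma}$ to the larger graph $\overline{\Gamma'}$.
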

\begin{proof}
      Let $\Gamma^{\prime}$ be the graph generated from $\Gamma$ by $V(\Gamma)\cut K$, and let $w$ be the extra vertex in $\Gamma'$. We check $\Gamma'$ for the properties of a minimal prime graph.
      
      First, we make the simple checks $\Gamma$ is both connected and has two or more vertices
    \begin{enumerate}[label=\arabic*)]
        \begin{item}
            We see that as we added a vertex to $\Gamma$, $\Gamma'$ must have 2 or more vertices as $\Gamma$ does, as $\Gamma$ is a minimal prime graph.
        \end{item}
        \begin{item}
            We see that as $\Gamma$ is a minimal prime graph, the only possibly disconnected vertex is $w$. As $K$ is nonempty, there is at least one edge to $w$, and therefore $\Gamma'$ is connected.
        \end{item}
    \end{enumerate}
    We now check for the 3 main properties of a minimal prime graph: that the complement is triangle-free, 3-colorable, and the minimal in the minimal prime graph sense.
    \begin{enumerate}
        \begin{item}
            We see that to check $\overline{\Gamma'}$ is triangle-free, it suffices to ensure $w$ is not in a triangle. This suffices as no other triangles can exist, as such a triangle would also be in $\overline{\Gamma}$, which is triangle-free as $\Gamma$ is a minimal prime graph.

            We see that the only vertices connected to $w$ in $\overline{\Gamma'}$ are those vertices in $K$. As $K$ is a clique, no two vertices in $K$ are adjacent to each other in $\overline{\Gamma'}$. Therefore, there can be no triangle containing the vertex $w$.
        \end{item}
        \begin{item}
            We see that $\overline{\Gamma'}$ is 3-colorable by taking a coloring of $\overline{\Gamma}$ such that the vertices in $K$ are colored by two colors. As in $\overline{\Gamma'}$ the vertex $w$ is only adjacent to vertices in $K$, we can give $w$ the third color to get a valid 3-coloring of $\overline{\Gamma'}$.
        \end{item}
        \begin{item}
            We now show minimality. We see it suffices to only consider adding edges of the form $\{w, v\}$ where $u\not\in K$ to $\overline{\Gamma'}$. Suppose we add such an edge. As $K$ is a maximal clique in $\Gamma$, $u$ cannot be adjacent to all $v\in K$ in $\Gamma$. Therefore, $u$ is adjacent to at least one $v\in K$ in $\overline{\Gamma'}$. We conclude as $w$ is adjacent to all $v\in K$ in $\overline{\Gamma'}$ that $\lbrace w, u\rbrace$, $\{w, v\}$, and $\{u, v\}$ form a triangle. 

            As $\{w, v\}$ was arbitrary, we conclude any such edge introduces a triangle in $\overline{\Gamma'}$, and therefore $\Gamma'$ is a minimal prime graph.
        \end{item}\end{enumerate}\end{proof}
For minimal prime graphs of small order, their generation sites often satisfy the criteria for both vertex duplication and clique generation. For larger orders, generation sites satisfying exactly one of vertex duplication or clique generation exist, seen below.
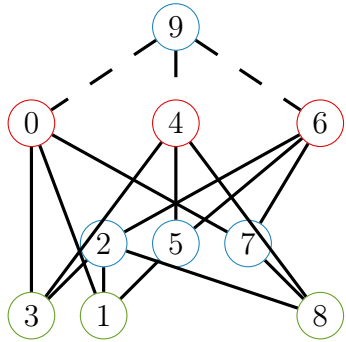
\begin{figure}[H]
    \begin{center}
\begin{tikzpicture}[scale=.32,
dot/.style = {circle,draw, inner sep=0, minimum size=1.5em},
dot/.default = 6pt  
                    ] 
\begin{scope}[every node/.style={dot}]
    \node[draw=nred] (0) at (0,8) {0};
    \node[draw=ngreen] (1) at (3,0) {1};
    \node[draw=nblue] (2) at (3,3) {2};
    \node[draw=ngreen] (3) at (0,0) {3};
    \node[draw=nred] (4) at (6,8) {4};
    \node[draw=nblue] (5) at (6,3) {5};
    \node[draw=nred] (6) at (12,8) {6};
    \node[draw=nblue] (7) at (9,3) {7};
    \node[draw=ngreen] (8) at (12,0) {8};
    \node[draw=nblue] (9) at (6,12) {9};

\end{scope}

\begin{scope}[
              every edge/.style={draw=black,very thick}]
     \path [-] (0) edge node {} (1);
\path [-] (0) edge node {} (3);
\path [-] (0) edge node {} (7);
\path [-] (1) edge node {} (2);
\path [-] (1) edge node {} (5);
\path [-] (2) edge node {} (3);
\path [-] (2) edge node {} (6);
\path [-] (2) edge node {} (8);
\path [-] (3) edge node {} (4);
\path [-] (4) edge node {} (5);
\path [-] (4) edge node {} (8);
\path [-] (5) edge node {} (6);
\path [-] (6) edge node {} (7);
\path [-] (7) edge node {} (8);
\path[dash pattern=on 10pt off 10pt] [-] (9) edge node {} (6);
\path[dash pattern=on 10pt off 10pt] [-] (9) edge node {} (4);
\path[dash pattern=on 10pt off 10pt] [-] (9) edge node {} (0);

\end{scope}
\end{tikzpicture}    
\end{center}
     \caption{\,\,Complement of a minimal prime graph demonstrating a generation site $\lbrace 1, 2, 3, 5, 7, 8\rbrace$ satisfying clique generation but not vertex duplication. Vertex 9 is not a true twin of any vertex, but is adjacent to a maximal independent set colored by at most 2 colors in the complement. This graph is available as MPG\_10\_22 in \cite{githubrepository}.}    \label{fig:vdnotcg}
     \end{figure}
\begin{figure}[H]
    \begin{center}
       \begin{tikzpicture}[scale=0.30,
dot/.style = {circle,draw, inner sep=0, minimum size=1.5em},
dot/.default = 6pt  
                    ] 
\begin{scope}[every node/.style={dot}]
\node[draw=nred] (0) at (0, 0) {0};
\node[draw=ngreen] (1) at (3.00000000000000, 4) {1};
\node[draw=nred] (2) at (4, 0) {2};
\node[draw=ngreen] (3) at (0, 8) {3};
\node[draw=nblue] (4) at (4, 12) {4};
\node[draw=nred] (5) at (4, 8) {5};
\node[draw=ngreen] (6) at (15.0000000000000, 4) {6};
\node[draw=nblue] (7) at (6, -4) {7};
\node[draw=ngreen] (8) at (8, 8) {8};
\node[draw=nred] (9) at (12, 0) {9};
\node[draw=nblue] (10) at (2, -4) {10};
\node[draw=ngreen] (11) at (12, 8) {11};
\node[draw=nred] (12) at (16, 0) {12};
\node[draw=nblue] (13) at (10, -4) {13};
\node[draw=nblue] (14) at (16, 8) {14};
\node[draw=nblue] (15) at (8, 12) {15};

\end{scope}

\begin{scope}[
              every edge/.style={draw=black,very thick}]
\path [-] (0) edge node {} (1);
\path [-] (0) edge node {} (3);
\path [-] (0) edge node {} (7);
\path [-] (0) edge node {} (11);
\path [-] (0) edge node {} (13);
\path [-] (0) edge node {} (14);
\path [-] (1) edge node {} (2);
\path [-] (1) edge node {} (5);
\path [-] (1) edge node {} (9);
\path [-] (1) edge node {} (12);
\path [-] (2) edge node {} (3);
\path [-] (2) edge node {} (6);
\path [-] (2) edge node {} (8);
\path [-] (2) edge node {} (11);
\path [-] (3) edge node {} (4);
\path [-] (3) edge node {} (10);
\path [-] (4) edge node {} (5);
\path [-] (4) edge node {} (8);
\path [-] (4) edge node {} (11);
\path [-] (5) edge node {} (6);
\path [-] (5) edge node {} (13);
\path [-] (6) edge node {} (7);
\path [-] (6) edge node {} (10);
\path [-] (6) edge node {} (14);
\path [-] (7) edge node {} (8);
\path [-] (7) edge node {} (12);
\path [-] (8) edge node {} (9);
\path [-] (8) edge node {} (13);
\path [-] (9) edge node {} (10);
\path [-] (9) edge node {} (11);
\path [-] (9) edge node {} (14);
\path [-] (10) edge node {} (12);
\path [-] (11) edge node {} (12);
\path [-] (12) edge node {} (13);
\path [-] (12) edge node {} (14);

\path [dash pattern=on 10pt off 10pt] (15) edge node {} (3);
\path [dash pattern=on 10pt off 10pt] (15) edge node {} (5);
\path [dash pattern=on 10pt off 10pt] (15) edge node {} (8);
\path [dash pattern=on 10pt off 10pt] (15) edge node {} (11);
\end{scope}
\end{tikzpicture}    
\end{center}
    \caption{\,\,Complement of a minimal prime graph demonstrating a generation site $\lbrace 0, 1, 2, 4, 6, 7, 9, 10, 12, 13\rbrace$ satisfying vertex duplication but not clique generation. Vertex 15 is a true twin of vertex 4, but is not adjacent to a maximal independent set. This graph is available as MPG\_15\_2327 in \cite{githubrepository}.}    \label{fig:cgnotvd}
\end{figure}
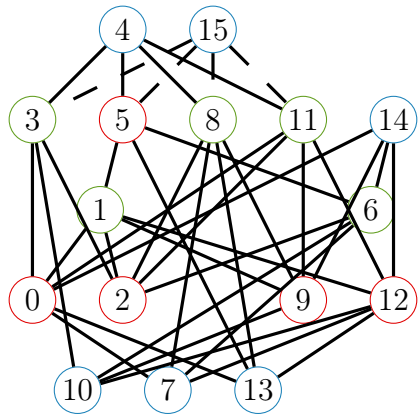

While we have shown the existence of other generation methods than vertex duplication, there are still many questions. For one, the current formulation of clique generation requires a choice of coloring. While we expect generation methods for minimal prime graphs to involve criteria related to coloring, given the importance of coloring in the definition of a minimal prime graph, we hope there is an alternative formulation or generalization which can avoid requiring this choice. Furthermore, as we show in , minimal prime graphs generated through vertex duplication share similar structural properties with the minimal prime graph they were generated from. While such a relation is less clear for more general generation methods, it is possible the additional structure of being a minimal prime graph will allow for some structure to be preserved.

We note that as there exist minimal prime graphs with generation sites which satisfy neither vertex duplication nor clique generation that other generation methods exist. For an example, see graph MPG\_16\_7432 of \cite{githubrepository}, which has a generating site $\{1, 2, 3, 4, 5, 7, 8, 10, 11, 13, 14\}$ that does not satisfy the conditions of vertex duplication or clique generation. However, we were unable to find any unifying property for the other generation sites we found.

Finally, a natural question is to consider what happens to the definition of base graph with this broader notion of generating a minimal prime graph. For example, there are base graphs which are generated from minimal prime graphs through clique generation. 
As such, one might consider the notion of a ``super'' base graph, which is a minimal prime graph which is not generated from any minimal prime graph. Computation for $n\le 100$ has shown the graphs $G_{n,k}$ with $n,k$ as in \Cref{thm:bsegraph} are super base graphs, and we conjecture this holds for all $n,k$ for which $G_{n,k}$ is a minimal prime graph. 
\begin{conjecture}
    Let $n\ge 5$ with $n\equiv 0, 5\mod 6$ and $k=\lfloor (n+2)/6\rfloor$. Then $\overline{G_{n,k}}\cut v$ for any vertex $v$ is not a minimal prime graph, and so these graphs cannot be generated from any minimal prime graph.
\end{conjecture}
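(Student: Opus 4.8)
The plan is to show that $\Gamma := \overline{G_{n,k}}\setminus\{v\}$ fails exactly one of the three defining properties of a minimal prime graph, namely minimality. Since $\overline{\Gamma} = G_{n,k}\setminus\{v\}$ is an induced subgraph of $G_{n,k}$, it inherits triangle-freeness from \Cref{lem:TFRG} and $3$-colorability from \Cref{lem:TFRG3Colorable}, so conditions (1) and (2) hold automatically. Everything therefore reduces to exhibiting a single edge of $\Gamma$ whose removal leaves $\overline{\Gamma}$ triangle-free and $3$-colorable; equivalently, a non-edge $\{a,b\}$ of $G_{n,k}$ with $a,b\neq v$ whose addition to $G_{n,k}\setminus\{v\}$ creates neither a triangle nor a fourth chromatic class. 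Because $G_{n,k}$ is circulant, hence vertex-transitive, and the complement of a vertex-transitive graph is vertex-transitive, I may assume without loss of generality that $v=0$.

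Next I would isolate the right non-edge. The minimality theorem for $\overline{G_{n,k}}$ shows that adding any non-edge to $G_{n,k}$ creates a triangle, so every non-adjacent pair has a common neighbor; the point is to find a pair whose \emph{only} common neighbor is the deleted vertex $0$. I propose the pair $a=k$, $b=n-k$, both of which are neighbors of $0$. First one checks $\{a,b\}$ is genuinely a non-edge: the difference $k-(n-k)\equiv 2k$ lies just outside the connection set $\{\pm k,\dots,\pm(2k-1)\}$. Then, writing the neighborhoods $N(k)$ and $N(n-k)$ as unions of two arcs each and intersecting them, one shows $N(k)\cap N(n-k)=\{0\}$ for both $n=6k$ and $n=6k-1$. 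Consequently $a$ and $b$ have no common neighbor in $G_{n,k}\setminus\{0\}$, so adding $\{a,b\}$ there produces no triangle.

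It then remains to preserve $3$-colorability, for which I would modify the explicit coloring $f$ of \Cref{lem:TFRG3Colorable}. The key observation is that every neighbor of $k$ other than $0$ lies in the single color class $2$ (the arc $\{2k,\dots,3k-1\}$ and the top arc both receive color $2$ under $f$, in both residue classes). Hence, after deleting $0$, recoloring the vertex $k$ with the color $0$ vacated by the deleted vertex is proper, and it separates $a=k$ from $b=n-k$ since $f(n-k)\in\{1,2\}$. This yields a proper $3$-coloring of $(G_{n,k}\setminus\{0\})+\{a,b\}$, so $\overline{\Gamma}$ with the edge $\{a,b\}$ restored is still triangle-free and $3$-colorable, and therefore $\Gamma$ is not minimal. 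Finally, by \Cref{def:MinimalPrimeGraphGeneration}, if $\overline{G_{n,k}}$ were generated from some minimal prime graph $\Delta$, then $\Delta\cong\overline{G_{n,k}}\setminus\{w\}$ would itself be a minimal prime graph for some $w$; the argument above rules this out for every $w$, proving that $\overline{G_{n,k}}$ is a super base graph.

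The main obstacle I anticipate is making the neighborhood-intersection computation uniform over both residue classes $n\equiv 0,5\pmod 6$ and over all admissible $k$, together with the small-$k$ boundary cases (e.g.\ $k=1$, where the arcs degenerate), while simultaneously guaranteeing that the recoloring step remains available. The delicate part is the claim $N(k)\cap N(n-k)=\{0\}$: one must verify that no second common neighbor survives for large $k$, since a single extra common neighbor would reinstate a triangle after deleting $0$ and collapse the whole argument. Establishing this uniqueness for all $n,k$ — rather than checking it case by case as the computation for $n\le 100$ does — is the crux that currently keeps the statement a conjecture.
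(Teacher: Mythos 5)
The first thing to note is that the paper does not prove this statement at all: it appears only as a conjecture, supported by machine computation for $n\le 100$, so there is no proof of the authors' to compare yours against. The second thing to note is that your proposal is, as far as I can verify, a correct and essentially complete proof, and the obstacle you flag at the end is not a real one. Writing $S=\{k,\dots,2k-1\}\cup\{n-2k+1,\dots,n-k\}$ for the connection set of $G_{n,k}$, one computes
\begin{equation*}
N(k)=k+S=\{2k,\dots,3k-1\}\cup\{n-k+1,\dots,n-1\}\cup\{0\},\qquad
N(n-k)=\{0,\dots,k-1\}\cup\{n-3k+1,\dots,n-2k\},
\end{equation*}
and the pairwise disjointness of the four nonzero arcs follows from the single inequality $n\ge 6k-1$ (the binding comparison is $3k-1<n-3k+1$); this is uniform over both residue classes and over all admissible $k$, with the arcs degenerating harmlessly when $k=1$. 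Hence $N(k)\cap N(n-k)=\{0\}$ always, and your triangle argument goes through: the only common neighbor of $k$ and $n-k$ is the deleted vertex. The recoloring step is equally uniform: under the coloring $f$ of \Cref{lem:TFRG3Colorable}, both arcs of $N(k)\setminus\{0\}$ lie entirely in color class $2$ whether $n=6k$ or $n=6k-1$, so giving $k$ the color $0$ after deleting vertex $0$ is proper, and $f(n-k)\in\{1,2\}$ keeps the added edge $\{k,n-k\}$ properly colored. I also checked the argument concretely for $n=5,6,11,12$.

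Combined with your correct reductions --- vertex-transitivity of the circulant and its complement to fix $v=0$, heredity of triangle-freeness and $3$-colorability under induced subgraphs so that only the minimality condition is at stake, and \Cref{def:MinimalPrimeGraphGeneration} to deduce the final clause about generation --- this settles the conjecture in full, which is strictly more than the paper achieves. One editorial caution: your opening appeal to the paper's minimality theorem for $\overline{G_{n,k}}$ is purely motivational and should stay that way, because that theorem actually fails at $k=1$: for $n=6$, adding the diagonal $\{0,3\}$ to $C_6$ creates no triangle and preserves bipartiteness, so $\overline{C_6}$ is not minimal (the paper's proof of that theorem overlooks the case $m=3k$ when $k=1$). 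Your argument never uses that theorem, and indeed the conjecture as stated does not presuppose it, so this does not damage your proof; just make sure the written version keeps that independence explicit.
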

\section{Acknowledgement}
    This research was conducted at Texas State University under NSF-REU grant DMS-1757233 and NSA grant H98230-21-1-0333 during the summer of 2021. The authors thank NSF and NSA for the financial support. The first, third, fourth, and fifth authors thank Texas State University for running the REU online during this difficult period of social distancing and providing a welcoming and supportive work environment. Those authors also thank their mentor, the second author Dr. Thomas Michael Keller, for his invaluable advice and guidance throughout this project. Professor Yong Yang, the director of the REU program, is recognized for conducting an inspiring and successful research program. 
    
\printbibliography

\end{document}